\pdfoutput=1
\documentclass{article}

\PassOptionsToPackage{numbers, compress}{natbib}



\usepackage[final]{neurips_2024}


\usepackage[utf8]{inputenc} 
\usepackage[T1]{fontenc}    
\usepackage{hyperref}       
\usepackage{url}            
\usepackage{booktabs}       
\usepackage{amsfonts}       
\usepackage{nicefrac}       
\usepackage{microtype}      
\usepackage{xcolor}         
\usepackage{enumitem}
\usepackage{algorithm}
\usepackage{framed}
\usepackage{algorithmic}

\usepackage{amsmath,amssymb,amsfonts,enumitem,amsthm,accents,pstricks,pst-node,pstricks-add,mathrsfs,xy}
\usepackage{graphicx}
\usepackage{subfigure}
\usepackage{xcolor}

\usepackage[symbol]{footmisc}

\def \S {\mathcal{S}}

\def \x {\mathcal{X}}
\def \Y {\mathcal{Y}}
\def \Z {\mathcal{Z}}

\def \R {\mathbb{R}}

\def \w {\mathbf{w}}

\def \E {\mathbb{E}}

\def \x {\mathbf{x}}

\def \1 {\mathbf{1}}

\def \s {\mathbf{s}}

\def \I {\mathbb{I}}

\def \D {\mathcal{D}}

\def \prox {\text{prox}}

\def \O {\mathcal{O}}

\def \dist {\text{dist}}

\DeclareMathOperator*{\argmax}{arg\,max}
\DeclareMathOperator*{\argmin}{arg\,min}

\newtheorem{theorem}{Theorem}[section]
\newtheorem{lemma}[theorem]{Lemma}
\newtheorem{corollary}[theorem]{Corollary}
\newtheorem{proposition}[theorem]{Proposition}

\theoremstyle{definition}
\newtheorem{definition}[theorem]{Definition}
\newtheorem{assumption}[theorem]{Assumption}
\newcommand\blfootnote[1]{%
  \begin{NoHyper}%
  \renewcommand\thefootnote{}\footnote{#1}%
  \addtocounter{footnote}{-1}%
  \end{NoHyper}%
}

\title{Single-Loop Stochastic Algorithms for Difference of Max-Structured Weakly Convex Functions }

\author{%
  Quanqi Hu $^1$ \quad Qi Qi $^2$ \quad Zhaosong Lu $^3$ \quad Tianbao Yang $^1$\\
  \\
   \small{$^1$ Department of Computer Science \& Engineering, Texas A\&M University}\\
   \small{$^2$ Department of Computer Science, The University of Iowa}\\
    \small{$^3$ Department of Industrial and Systems Engineering, University of Minnesota}\\
  \small{\texttt{\{quanqi-hu, tianbao-yang\}@tamu.edu} \quad \texttt{qi-qi@uiowa.edu} \quad \texttt{zhaosong@umn.edu}}\\
}

\begin{document}

\maketitle

\begin{abstract}
In this paper, we study a class of non-smooth non-convex problems in the form of $\min_{x}[\max_{y\in\mathcal Y}\phi(x, y) - \max_{z\in\mathcal Z}\psi(x, z)]$, where both $\Phi(x) = \max_{y\in\mathcal Y}\phi(x, y)$ and $\Psi(x)=\max_{z\in\mathcal Z}\psi(x, z)$ are weakly convex functions, and $\phi(x, y), \psi(x, z)$ are strongly concave functions in terms of $y$ and $z$, respectively.  It covers two families of problems that have been studied but are missing single-loop stochastic algorithms, i.e., difference of weakly convex functions and weakly convex strongly-concave min-max problems. 
We propose a stochastic Moreau envelope approximate gradient method dubbed SMAG, the first single-loop algorithm for solving these problems, and provide a state-of-the-art non-asymptotic convergence rate. The key idea of the design is to compute an approximate gradient of the Moreau envelopes of $\Phi, \Psi$ using only one step of stochastic gradient update of the primal and dual variables. Empirically, we conduct experiments on positive-unlabeled (PU) learning and partial area under ROC curve (pAUC) optimization with an adversarial fairness regularizer to validate the effectiveness of our proposed algorithms.\blfootnote{Correspondence to: Tianbao Yang <tianbao-yang@tamu.edu>.}
\end{abstract}

\section{Introduction}
In this paper, we consider a class of non-convex, non-smooth problems in the following form
\begin{equation}\label{prob:dmax}
    \min_{x\in \R^{d_x}} \big\{F(x):=\max_{y\in \Y}\phi(x,y) - \max_{z\in \Z}\psi(x,z)\big\},
\end{equation}
where the sets $\Y\subset \R^{d_y},\,\Z\subset \R^{d_z}$ are convex and compact, and the two component functions $\phi(x,y)$ and $\psi(x,z)$ are weakly-convex in terms of $x$ and strongly-concave in the terms of $y$ and $z$, respectively. Both component functions are in expectation forms, i.e., $\phi(x,y) = \E_{\xi\sim \D_\phi}[\phi(x,y;\xi)]$ and $\psi(x,z) = \E_{\zeta\sim \D_\psi}[\psi(x,z;\zeta)]$. We refer to this class of problems as the Difference of Max-Structured Weakly Convex Functions (DMax) Optimization. DMax optimization unifies two emerging families of problems in optimization field, difference-of-weakly-convex (DWC) optimization 
\begin{equation}\label{prob:dwc}
    \min_{x\in \R^{d_x}} \{F(x):=\phi(x) - \psi(x)\},
\end{equation}
and weakly-convex-strongly-concave (WCSC) min-max optimization
\begin{equation}\label{prob:minmax}
    \min_{x\in \R^{d_x}} \big\{F(x):=\max_{y\in \Y}\phi(x,y)\big\}.
\end{equation}
Thus, DMax optimization has a wide range of applications in machine learning and AI, including applications of DWC optimization (e.g., positive-unlabeled (PU) Learning~\cite{DBLP:conf/icml/XuQLJY19}, non-convex sparsity-promoting regularizers~\cite{DBLP:conf/icml/XuQLJY19}, Boltzmann machines~\cite{pmlr-v54-nitanda17a}) and  applications of min-max optimization (e.g., adversarial learning~\cite{sinha2020certifyingdistributionalrobustnessprincipled,madry2019deeplearningmodelsresistant},  distributional robust learning~\cite{Grbzbalaban2022ASS, rafique2018non}, learning with non-decomposable loss~\cite{rafique2018non}). In recent years, the scale of data and models significantly increased, leading to the demand of more efficient optimization methods. However, all existing stochastic methods for DWC optimization and non-smooth WCSC min-max optimization with state-of-the-art  non-asymptotic convergence rate $\O(\epsilon^{-4})$ are double-loop. As a result, these methods are complex regarding the implementation and require extensive hyperparameter tuning. To close this gap, we propose a single-loop stochastic algorithm for DMax optimization and provide non-asymptotic convergence analysis to match the state-of-the-art non-asymptotic convergence rate. 


The main challenges of designing a single-loop method for DMax optimization are threefold. 1) given the weakly-convex nature of the component functions, their difference $F(x)$ is not necessarily weakly-convex, resulting in a non-smooth non-convex optimization problem. 2) the component functions $\max_{y\in \Y}\phi(x,y)$ and $\max_{z\in \Z}\psi(x,z)$ require solving maximization subproblems, making unbiased estimations of their subgradients inaccessible. 3) existing work on non-smooth problems with DC or/and min-max structures heavily rely on inner loops to solve subproblems to a certain accuracy. 

To address the first challenge, we apply Moreau envelope smoothing technique \cite{Moreau1965, davis2018stochastic} to the component functions individually and take their difference as a smooth approximation of the original objective. Inspired by existing work \cite{Sun2022AlgorithmsFD, yao2022largescale}, we show that solving the original DMax problem can be achieved by solving this smooth approximation. Consequently, the problem is transformed into a smooth problem with two layer of nested optimization structure, the Moreau envelope and the maximization from the min-max structure. In order to avoid inner-loop, we perform only one step of update for each of the nested optimization problems. Our analysis leverages the fast convergence of strongly convex/concave problems, proving that single-step updates are sufficient to achieve a state-of-the-art convergence rate. Although the Moreau envelope smoothing is not new for solving DC and min-max optimization~\cite{Sun2022AlgorithmsFD, yao2022largescale,NEURIPS2020_52aaa62e,pmlr-v151-yang22b}, the existing results either require double loops~\cite{Sun2022AlgorithmsFD, yao2022largescale} or require smoothness of the objective function~\cite{NEURIPS2020_52aaa62e,pmlr-v151-yang22b}.

\textbf{Contributions.} We summarize the main contribution of this work as following.
\begin{itemize}[leftmargin=*]
    \item We construct a new framework DMax optimization that unifies the DWC optimization and WCSC min-max optimization. Based on a Moreau envelope smoothing technique, we propose a single-loop stochastic algorithm, namely SMAG, for DMax optimization in non-smooth setting, which achieves $\O(\epsilon^{-4})$ convergence rate.
    \item We show that the proposed method leads to the first single-loop stochastic algorithms for DWC optimization and non-smooth WCSC min-max optimization achieving $\O(\epsilon^{-4})$ convergence rate.
    \item Finally, we present experimental results on applications including Positive-Unlabeled (PU) Learning and partial AUC optimization with an adversarial fairness regularizer to validate the effectiveness of our proposed algorithms.
\end{itemize}

\section{Related Work}

\begin{table}[t] 
	\caption{Comparison with existing stochastic methods for solving DWC problems with non-asymptotic convergence guarantee. $\,^*$ The method SBCD is designed to solve a problem in the form of $\min_{x}\{\min_{y}\phi(x,y) - \min_{z}\psi(x,z)\}$ with a specific formulation of $\phi$ and $\psi$. However, the method and analysis can be generalized to solving non-smooth DWC problems.}\label{tab:1} 
	\centering
	\label{tab:2}
	\begin{tabular}{lcccccc}
			\toprule
			Method &  Smoothness of $\phi,\psi$ & Complexity & Loops\\
        \midrule
        SDCA~\cite{pmlr-v54-nitanda17a} &   $\phi$: Smooth &  $\O(\epsilon^{-4})$ & Double\\
        SSDC~\cite{DBLP:conf/icml/XuQLJY19}  & $\phi$ or $\psi$: $\nu$-Hölder continuous gradient& $\O(\epsilon^{-4/\nu})$ & Double\\
        $\text{SBCD}^*$~\cite{yao2022largescale}  &Non-smooth& $\O(\epsilon^{-6})$& Double \\
        SMAG (ours)&Non-smooth& $\O(\epsilon^{-4})$& Single \\
		\bottomrule
	\end{tabular}
\end{table}


\begin{table}[t] 
	\caption{Comparison with existing stochastic methods for solving non-convex non-smooth min-max problems. The objective function is in the form of $\phi(x,y)=f(x,y)-g(y)+h(x)$. NS and S stand for non-smooth and smooth respectively, and NSP means non-smooth and its proximal mapping is easily solved. WC, C stand for weakly-convex and convex respectively. WCSC stands for weakly-convex-strongly-concave, SSC stands for smooth and strongly concave  and WCC means weakly-convex-concave. Note that Epoch-GDA and SMAG studies the general formulation $\phi(x,y) = f(x,y)$. 
 }\label{tab:1} 
	\centering
	\label{tab:2}
	\begin{tabular}{lcccccc}
			\toprule
			Method&  $f(x,y)$ & $g(y)$ & $h(x)$ & Complexity & Loops\\
        \midrule
        PG-SMD~\cite{doi:10.1080/10556788.2021.1895152}& NS, WCC & NSP, SC &NSP, C &$\O(\epsilon^{-4})$ & Double \\
        SAPD+~\cite{NEURIPS2022_880d8999}&SSC &NSP, C & NSP,C&$\O(\epsilon^{-4})$& Double \\
        Epoch-GDA~\cite{NEURIPS2020_3f8b2a81}  &NS, WCSC & -  & -&$\O(\epsilon^{-4})$& Double\\
        StocAGDA~\cite{Bo2020AlternatingPS}&SSC &NSP, C & NSP, C&$\O(\epsilon^{-4})$& Single\\
        SMAG (ours)  &NS,WCSC &- & -&$\O(\epsilon^{-4})$ & Single\\
		\bottomrule
	\end{tabular}
\end{table}

\textbf{Stochastic DC Optimization.} DWC can be converted into Difference-of-convex (DC) programming. DC programming was initially introduced in \cite{Tao1986AlgorithmsFS} and has been extensively studied since then. A comprehensive review on the developments of DC programming can be found in \cite{dcdevelopments}. Despite the rich literature on DC programming, DC in stochastic setting has rarely been mentioned until recently. Most of the existing studies on stochastic DC optimization are based on the classical method, DC Algorithm (DCA) in deterministic DC optimization. The main idea of DCA is to approximate the DC problem by a convex problem by taking the linear approximation of the second component. In other words, DCA solves $\min_{x} \left\{\phi(x) - \langle\nabla \phi(x_k), x\rangle\right\}$ to update $x_k$ and thus forms a double-loop algorithm. \cite{pmlr-v70-thi17a} first proposed stochastic DCA (SDCA) for solving large sum problems of non-convex smooth functions, which was further generalized to solving large sum non-smooth problems in \cite{LETHI2020220}. \cite{doi:10.1137/20M1385706} is the first work that allows both components in DC problems to be non-smooth. The authors proposed a SDCA scheme in the aggregated
update style, where all past information needs to be stored for constructing future subproblems. \cite{9933731} improved the efficiency of the SDCA scheme by removing the need of storing historical information. So far, none of the above work provides non-asymptotic convergence guarantee. The first non-asymptotic convergence analysis was established in \cite{pmlr-v54-nitanda17a}. The authors proposed a stochastic proximal DC algorithm (SPD), which modifies SDCA by adding an extra quadratic term after linearizing the second component function, and proved that SPD has a convergence rate of $\O(\epsilon^{-4})$. The main drawback of their analysis is that they need the smoothness assumption of the first component function. With very similar algorithm design, \cite{DBLP:conf/icml/XuQLJY19} managed to partially relax the smoothness assumption. Given at least one of the two component functions having $\nu$-Hölder continuous gradient, i.e., $\|\nabla f(x) - \nabla f(x')\|\leq \|x-x'\|^\nu$ for all $x, x'$, they proved a convergence rate of $\O(\epsilon^{-4/\nu})$. In fact, the Hölder continuous gradient assumption is still fairly strong as some of the common non-smooth functions do not satisfy, for example the hinge loss function.

Recently, another approach to tackling the non-smoothness in DC problems has been considered. Following the smoothing technique in non-smooth weakly-convex optimization literature \cite{davis2018stochastic}, \cite{Sun2022AlgorithmsFD, moudafi:hal-03581239} constructed Moreau envelope smoothing approximations for both of the component functions respectively and established non-asymptotic convergence analysis under deterministic setting and the assumption that either one component function is smooth or the proximal-point subproblems can be solved exactly. Following a similar idea, \cite{yao2022largescale} studied a problem in the form of $\min_x F(x):=\min_y \phi(x,y) - \min_z \psi(x,z)$, where $\phi$ and $\psi$ are in some specific formulations, and proposed a double-loop algorithm with $\O(\epsilon^{-6})$ convergence rate. Although the $\phi$ and $\psi$ are non-smooth, their analysis heavily relies on the properties in the given formulation, especially the structures in the dual variables $y,z$, thus is not trivial to generalize.

Note that none of the aforementioned work is able to solve the DMax problem, as they require unbiased stochastic gradient estimations of the two component functions, which are not accessible in DMax due to the presence of the maximization structure.




\textbf{Stochastic Non-smooth Weakly-Convex-Strongly-Concave Min-Max Optimization.} 
Stochastic WCSC min-max optimization has been an emerging topic in recent years. Most of the existing works focuses on the smooth setting, i.e., the objective is smooth \cite{pmlr-v119-jin20e, pmlr-v119-lin20a, Zhang2022SAPDAA, pmlr-v151-yang22b,NEURIPS2020_52aaa62e,pmlr-v151-yang22b} or the stochastic gradient oracles are Lipschitz continuous \cite{MancinoBall2023VariancereducedAM, Yang2022NestYA, Huang2020AcceleratedZM, NEURIPS2020_ecb47fbb, Xu2020EnhancedFA}.  To the best of our knowledge, \cite{doi:10.1080/10556788.2021.1895152} is the first work that considers non-smooth WCSC min-max problems. They considered a special structure where the maximization over $y$ given $x$ can be simply solved and it is solved with $O(1/\epsilon^2)$ times. They proposed a nested method Proximally Guided Stochastic Mirror Descent Method (PG-SMD) that achieves a convergence rate of $\O(\epsilon^{-4})$. Later,  \cite{NEURIPS2020_3f8b2a81} further relaxed the assumption by removing the requirement of the special structure, and proved that their nested method Epoch-GDA has a similar convergence rate of $\O(\epsilon^{-4})$. Another line of work studies a special case of the general non-smooth non-convex min-max optimization, where the objective is assumed to be composite, i.e., $\phi(x,y) = f(x,y)-g(y)+h(x)$, so that $f$ is smooth while $g, h$ are potentially non-smooth \cite{Bo2020AlternatingPS, NEURIPS2022_880d8999}. Both works established $\O(\epsilon^{-4})$ convergence rate, and assume $f$ is smooth and strongly concave, $g$ and $h$ are convex but potentially non-smooth and their proximal mappings can be easily solved.  
However, none of them is applicable to the general non-smooth WCSC min-max optimization. 

\section{Preliminaries}
\textbf{Notations.} For simplicity, we denote $\Phi(x):=\max_{y\in\Y}\phi(x,y)$, $\Psi(x):=\max_{z\in\Z}\psi(x,z)$, $y^*(\cdot):=\argmax_{y\in \Y}\phi(\cdot,y)$, and $z^*(\cdot):=\argmax_{z\in \Z}\psi(\cdot,z)$. We use $\|\cdot\|$ to denote the Euclidean norm of a vector and $P_{\cal C}(\cdot)$ to denote the Euclidean projection onto a closed set ${\cal C}$. We use the following definitions of general subgradient and subdifferential \cite{davis2018stochastic,rockafellar2009variational}.
\begin{definition}[subgradient and subdifferential]
    Consider a function $f:\R^d\to \R\cup \{\infty\}$ and a point $x$ with finite $f(x)$. A vector $v\in \R^d$ is a general subgradient of $f$ at $x$ if 
    \begin{equation*}
        f(y)\geq f(x)+\langle v, y-x\rangle + o(\|y-x\|)\quad \text{as } y\to x.
    \end{equation*}
    The subdifferential $\partial f(x)$ is the set of subgradients of $f$ at point $x$.
\end{definition}
For simplicity, we abuse the notation $\partial f(x)$ to denote one subgradient from the corresponding subdifferential when no confusion could be caused. We use $\tilde{\partial} f(x)$ and $\tilde{\nabla} f(x)$ to represent an unbiased stochastic estimator of the subgradient $\partial f(x)$ and the gradient $\nabla f(x)$ respectively.  A function $f:\D \to \R$ is said to be $L$-smooth if $\|\nabla f(x) - \nabla f(x')\|\leq L\|x-x'\|$ for all $x,x'\in \D$. A function $f:\R^d \to \R\cup \{\infty\}$ is $\delta$-weakly convex if $f(\cdot)+\frac{\delta}{2}\|\cdot\|^2$ is convex. A mapping $\mathcal{M}:\D \to \R^l$ is said to be $C$-Lipschitz continuous if $\|\mathcal{M}(x) - \mathcal{M}(x')\|\leq C\|x-x'\|$ for all $x,x'\in \D$.

Consider solving a non-smooth problem $\min_x f(x)$. One of the main challenges is that the $\epsilon$-stationary point, i.e., a point $x$ such that $\dist(0,\partial f(x)) \leq \epsilon$, which is the typical goal for smooth problems, may not exist in the neighborhood of its optimal solution. A classical counter example would be $f(x) = |x|$, where for $\epsilon\in [0,1)$ the only $\epsilon$-stationary point is the optimal solution $x=0$. A standard solution to this issue in weakly-convex setting is to use a relaxed convergence criteria, that is to find a point no more than $\epsilon$ away from an $\epsilon$-stationary point. This is called a nearly $\epsilon$-stationary point, and is widely used in non-smooth weakly-convex optimization literature \cite{doi:10.1137/17M1151031,doi:10.1080/10556788.2021.1895152, yan2020sharp, zhang2023sapd,zhao2022primaldual,pmlr-v119-lin20a}. In fact, finding a nearly $\epsilon$-stationary point for $f(x)$ can be achieved by finding an $\epsilon$-stationary point of $f_\gamma(x)$, the Moreau envelope of $f(x)$. Assume function $f$ is $\delta$-weakly-convex, then its Moreau envelope and proximal map are given by
\begin{equation*}
\begin{aligned}
    &f_\gamma(x):=\min_{x'}\Big\{f(x')+\frac{1}{2\gamma}\|x'-x\|^2\Big\}, \quad \prox_{\gamma f}(x):=\argmin_{x'}\Big\{f(x')+\frac{1}{2\gamma}\|x'-x\|^2\Big\}.
\end{aligned}
\end{equation*}
Existing work \cite{davis2018stochastic} has shown that with $\gamma\in (0,\delta^{-1})$ and $\hat{x} = \prox_{\gamma f}(x)$, we have
\begin{equation*}
    \begin{aligned}
        &\nabla f_\gamma(x) =
         \gamma^{-1}(x-\hat{x}),\quad f(\hat{x}) \leq f(x),\quad \dist(0, \partial f(\hat{x}))\leq \|\nabla f_\gamma(x)\|.
    \end{aligned}
\end{equation*}
Moreover, $\prox_{\gamma f}(x)$ is  $\frac{1}{1-\gamma \delta}$ - Lipschitz continuous \cite{Sun2022AlgorithmsFD}.

Now we consider the DMax problem (\ref{prob:dmax}). By Danskin's Theorem, the weak convexity assumption of $\phi(\cdot,y)$ and $\psi(\cdot,z)$ naturally leads to the weak convexity of $\Phi(\cdot)$ and $\Psi(\cdot)$. Since the weak convexity assumption of component functions does not guarantee the weak convexity of their difference function $F(x)$, one may neither 1) use nearly $\epsilon$-stationary point of $F(x)$ as the convergence metric, nor 2) directly apply Moreau envelope smoothing technique to $F(x)$. To tackle the first issue, we follow the existing work \cite{yao2022largescale} to use the following convergence metric for non-smooth DWC problems.
\begin{definition}[Definition 2 in \cite{yao2022largescale}]
Given $\epsilon >0$, we say $x$ is a \textbf{nearly $\epsilon$-critical point} of $\min_x \{F(x) := \Phi(x)-\Psi(x)\}$ if there exist $v, x', x''$ such that $v\in \partial \Phi(x') - \partial\Psi(x'')$ and $\max\{\E\|v\|, \E\|x-x'\|, \E\|x-x''\|\}\leq \epsilon$.
\end{definition}
To tackle the second issue, we take the Moreau envelope of $\Phi(\cdot)$ and $\Psi(\cdot)$ individually and define the smooth approximation of $F(x)$ as
\begin{equation}
    F_\gamma (x) = \Phi_\gamma(x)-\Psi_\gamma(x).
\end{equation}
The recent work \cite{Sun2022AlgorithmsFD} has proven that $F_\gamma (x)$ is indeed smooth.
\begin{proposition}[Proposition EC.1.2 in \cite{Sun2022AlgorithmsFD}]\label{prop:1}
    Assume $\Phi(\cdot)$ and $\Psi(\cdot)$ are $\delta_{\phi}, \delta_{\psi}$-weakly convex respectively. Then $F_\gamma (x) = \Phi_\gamma(x)-\Psi_\gamma(x)$ is $L_F$-smooth, where $L_F = \frac{2}{\gamma -\gamma^2 \min\{\delta_{\psi}, \delta_{\phi}\}}$.
\end{proposition}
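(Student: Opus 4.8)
The plan is to differentiate $F_\gamma$ explicitly and reduce the whole statement to the Lipschitz continuity of proximal maps already recorded in the preliminaries. Since $\Phi$ is $\delta_\phi$-weakly convex and $\Psi$ is $\delta_\psi$-weakly convex, for any $\gamma$ with $\gamma\max\{\delta_\phi,\delta_\psi\}<1$ both envelopes are smooth and, by the gradient identity recalled before the proposition,
\[
\nabla \Phi_\gamma(x) = \gamma^{-1}\big(x-\prox_{\gamma\Phi}(x)\big),\qquad \nabla \Psi_\gamma(x) = \gamma^{-1}\big(x-\prox_{\gamma\Psi}(x)\big).
\]
Hence $F_\gamma=\Phi_\gamma-\Psi_\gamma$ is differentiable with $\nabla F_\gamma=\nabla\Phi_\gamma-\nabla\Psi_\gamma$, and the task is to bound the Lipschitz constant of this gradient.

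The key step I would use is the cancellation that occurs in the difference: subtracting the two expressions above, the identity terms $\gamma^{-1}x$ cancel, leaving
\[
\nabla F_\gamma(x) = \gamma^{-1}\big(\prox_{\gamma\Psi}(x)-\prox_{\gamma\Phi}(x)\big).
\]
This is what makes a one-line estimate possible: it expresses $\nabla F_\gamma$ purely through the two proximal maps, so I never have to control $\nabla\Phi_\gamma$ and $\nabla\Psi_\gamma$ individually (doing so separately would only yield the weaker per-envelope constant $\gamma^{-1}\big(1+(1-\gamma\delta)^{-1}\big)$ and a looser final bound).

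Then, for arbitrary $x,x'$, I would write
\[
\|\nabla F_\gamma(x)-\nabla F_\gamma(x')\| = \gamma^{-1}\big\|\big(\prox_{\gamma\Psi}(x)-\prox_{\gamma\Psi}(x')\big)-\big(\prox_{\gamma\Phi}(x)-\prox_{\gamma\Phi}(x')\big)\big\|,
\]
apply the triangle inequality, and invoke the fact recalled in the preliminaries that $\prox_{\gamma\Phi}$ and $\prox_{\gamma\Psi}$ are $\tfrac{1}{1-\gamma\delta_\phi}$- and $\tfrac{1}{1-\gamma\delta_\psi}$-Lipschitz. This gives the bound $\gamma^{-1}\big(\tfrac{1}{1-\gamma\delta_\phi}+\tfrac{1}{1-\gamma\delta_\psi}\big)\|x-x'\|$. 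Bounding both proximal constants by the common value $\tfrac{1}{1-\gamma\max\{\delta_\phi,\delta_\psi\}}$ and factoring out $\gamma^{-1}$ then produces the constant $L_F$.

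The argument is essentially mechanical once these two ingredients are in hand, so I do not anticipate a serious obstacle. The only genuinely substantive points are (i) recognizing the cancellation of the linear terms, which is precisely the step that converts a difference of two separately-smooth functions into a tidy difference of Lipschitz maps, and (ii) checking at the outset that $\gamma$ lies in the admissible range $\gamma<1/\max\{\delta_\phi,\delta_\psi\}$, which is exactly the condition guaranteeing that both envelopes are smooth, both proximal maps single-valued and Lipschitz, and $L_F$ finite and positive.
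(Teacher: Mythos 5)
Your route --- differentiate, exploit the cancellation $\nabla F_\gamma(x)=\gamma^{-1}\big(\prox_{\gamma\Psi}(x)-\prox_{\gamma\Phi}(x)\big)$, then apply the triangle inequality together with the $\tfrac{1}{1-\gamma\delta_\phi}$- and $\tfrac{1}{1-\gamma\delta_\psi}$-Lipschitz continuity of the two proximal maps --- is exactly the argument behind the cited Proposition EC.1.2 of \cite{Sun2022AlgorithmsFD} (the paper itself gives no proof, it only cites that result), and everything up to the bound $\|\nabla F_\gamma(x)-\nabla F_\gamma(x')\|\le \gamma^{-1}\big(\tfrac{1}{1-\gamma\delta_\phi}+\tfrac{1}{1-\gamma\delta_\psi}\big)\|x-x'\|$ is correct, including your observation that one must first check $\gamma<1/\max\{\delta_\phi,\delta_\psi\}$.

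The problem is the final identification. Since $\delta\mapsto\tfrac{1}{1-\gamma\delta}$ is increasing, bounding both proximal constants by a common value forces you to use $\max\{\delta_\phi,\delta_\psi\}$, so what your argument yields is $\tfrac{2}{\gamma-\gamma^2\max\{\delta_\phi,\delta_\psi\}}$. That is \emph{not} the stated $L_F=\tfrac{2}{\gamma-\gamma^2\min\{\delta_\psi,\delta_\phi\}}$: the stated constant is the smaller of the two, and establishing Lipschitz continuity with a larger constant does not imply it with a smaller one. Worse, the ``min'' version cannot be rescued by any argument, because it is false as stated: take $\Psi\equiv 0$ (so $\delta_\psi=0$ and $\Psi_\gamma\equiv 0$) and $\Phi(x)=-\tfrac{\delta_\phi}{2}\|x\|^2$; then $F_\gamma=\Phi_\gamma$ has $\nabla F_\gamma(x)=-\tfrac{\delta_\phi}{1-\gamma\delta_\phi}\,x$, whose exact Lipschitz constant $\tfrac{\delta_\phi}{1-\gamma\delta_\phi}$ exceeds $\tfrac{2}{\gamma}=\tfrac{2}{\gamma-\gamma^2\min\{\delta_\phi,\delta_\psi\}}$ whenever $\gamma\delta_\phi>2/3$. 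The discrepancy traces to the source: in \cite{Sun2022AlgorithmsFD} both components share a single weak-convexity modulus $\rho$, where $\min$ and $\max$ coincide and the constant is $\tfrac{2}{\mu-\mu^2\rho}$; transcribed to unequal moduli, the proof (yours included) supports $\max$, not $\min$, and the $\max$ constant is also the one that must be fed into the step-size restriction $\eta_0\le\tfrac{1}{2L_F}$ later in the paper. So your proof is the right one and proves what the paper actually needs, but you should state the conclusion with $\max\{\delta_\phi,\delta_\psi\}$ (equivalently, note both functions are $\max\{\delta_\phi,\delta_\psi\}$-weakly convex and invoke the equal-modulus result); as written, the claim that your bound ``produces the constant $L_F$'' is a non sequitur.
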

Moreover, one can show that a good approximate stationary point $x$ of $F_\gamma (\cdot)$ and a good approximation point $x'$ to the proximal points $\prox_{\gamma \Phi}(x)$ and $\prox_{\gamma \Psi}(x)$ can guarantee that $x'$ is a nearly $\epsilon$-critical point of $\min_{\hat{x}} F(\hat{x})$.
\begin{lemma}[Lemma 3 in \cite{yao2022largescale}]\label{lem:1}
    Assume $\Phi(\cdot)$ and $\Psi(\cdot)$ are $\delta_{\phi}, \delta_{\psi}$-weakly convex respectively, and $0<\gamma < \min\{\delta_\phi^{-1}, \delta_\psi^{-1}\}$. If $x$ is a vector such that $\E[\|\nabla F_\gamma (x)\|^2] \leq \min\{1, \gamma^{-2}\}\epsilon^2/4$, and $x'$ is a vector such that $\E[\|x'- \prox_{\gamma \Phi}(x)\|^2]\leq \epsilon^2/4$ or $\E[\|x'- \prox_{\gamma \Psi}(x)\|^2]\leq \epsilon^2/4$, then $x'$ is a nearly $\epsilon$-critical point of $\min_{\hat{x}} F(\hat{x})$.
\end{lemma}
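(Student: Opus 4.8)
The plan is to produce the triple $(v,a,b)$ demanded by the definition of a nearly $\epsilon$-critical point (playing the roles of ``$v,x',x''$'' there, with the candidate point being the lemma's $x'$) by taking $a,b$ to be the two proximal points and $v$ the Moreau-envelope gradient. Write $\hat{x}_\Phi:=\prox_{\gamma\Phi}(x)$ and $\hat{x}_\Psi:=\prox_{\gamma\Psi}(x)$ and set $a=\hat{x}_\Phi$, $b=\hat{x}_\Psi$. First I would invoke the first-order optimality condition of the proximal subproblem: since $\gamma<\min\{\delta_\phi^{-1},\delta_\psi^{-1}\}$ makes each subproblem strongly convex (its curvature is $\gamma^{-1}-\delta_\phi>0$), its minimizer $\hat{x}_\Phi$ satisfies $0\in\partial\Phi(\hat{x}_\Phi)+\gamma^{-1}(\hat{x}_\Phi-x)$, i.e.\ $\gamma^{-1}(x-\hat{x}_\Phi)\in\partial\Phi(\hat{x}_\Phi)$, and likewise for $\Psi$. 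Combined with the envelope-gradient identity $\nabla\Phi_\gamma(x)=\gamma^{-1}(x-\hat{x}_\Phi)$ stated in the preliminaries, this gives $\nabla\Phi_\gamma(x)\in\partial\Phi(\hat{x}_\Phi)$ and $\nabla\Psi_\gamma(x)\in\partial\Psi(\hat{x}_\Psi)$. Hence $v:=\nabla F_\gamma(x)=\nabla\Phi_\gamma(x)-\nabla\Psi_\gamma(x)$ lies in $\partial\Phi(\hat{x}_\Phi)-\partial\Psi(\hat{x}_\Psi)=\partial\Phi(a)-\partial\Psi(b)$, delivering the required inclusion.

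The second step records the identity that couples the two proximal points. Subtracting $\hat{x}_\Phi=x-\gamma\nabla\Phi_\gamma(x)$ and $\hat{x}_\Psi=x-\gamma\nabla\Psi_\gamma(x)$ gives $\hat{x}_\Phi-\hat{x}_\Psi=-\gamma\nabla F_\gamma(x)$, so $\|\hat{x}_\Phi-\hat{x}_\Psi\|=\gamma\|\nabla F_\gamma(x)\|$. This is the bridge that transfers closeness of $x'$ from one proximal point to the other, which is needed because the hypothesis only controls the distance of $x'$ to \emph{one} of the two proximal points while the definition requires controlling the distance to \emph{both} base-points $a$ and $b$.

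It then remains to check the three expected-norm bounds. For $v$: since $\min\{1,\gamma^{-2}\}\le 1$, Jensen's inequality yields $\E\|v\|\le\sqrt{\E\|\nabla F_\gamma(x)\|^2}\le\epsilon/2\le\epsilon$. For the distance bounds I would split on the hypothesis. If $\E\|x'-\hat{x}_\Phi\|^2\le\epsilon^2/4$, then Jensen gives $\E\|x'-a\|\le\epsilon/2\le\epsilon$ directly, while the triangle inequality $\|x'-\hat{x}_\Psi\|\le\|x'-\hat{x}_\Phi\|+\gamma\|\nabla F_\gamma(x)\|$ together with $\gamma\E\|\nabla F_\gamma(x)\|\le\sqrt{\gamma^2\min\{1,\gamma^{-2}\}}\,\epsilon/2=\sqrt{\min\{\gamma^2,1\}}\,\epsilon/2\le\epsilon/2$ gives $\E\|x'-b\|\le\epsilon$. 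The case $\E\|x'-\hat{x}_\Psi\|^2\le\epsilon^2/4$ is symmetric with the roles of $a$ and $b$ interchanged.

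The main obstacle is conceptual rather than computational: one must realize that the definition's two subgradient base-points should be chosen as the proximal points, and that these are tied together only through $\nabla F_\gamma$. The one genuinely load-bearing estimate is checking that the factor $\min\{1,\gamma^{-2}\}$ in the hypothesis on $\E\|\nabla F_\gamma(x)\|^2$ is exactly what forces both $\E\|v\|$ and the cross term $\gamma\E\|\nabla F_\gamma(x)\|$ down to $\epsilon/2$ regardless of whether $\gamma\le1$ or $\gamma>1$; without the $\gamma^{-2}$ branch the cross term $\gamma\|\nabla F_\gamma(x)\|$ would blow up for large $\gamma$, so the calibration of the hypothesis is precisely what makes the two-sided closeness go through.
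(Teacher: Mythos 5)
Your proof is correct. Note that the paper does not prove this lemma itself --- it is imported verbatim as Lemma~3 of \cite{yao2022largescale} and used as a black box at the end of the proof of Theorem~\ref{thm:1_full} --- so there is no in-paper proof to compare against; your argument (first-order optimality of the strongly convex proximal subproblems to get $\gamma^{-1}(x-\prox_{\gamma\Phi}(x))\in\partial\Phi(\prox_{\gamma\Phi}(x))$ and its $\Psi$-analogue, the identity $\prox_{\gamma\Phi}(x)-\prox_{\gamma\Psi}(x)=-\gamma\nabla F_\gamma(x)$ to transfer closeness from one proximal point to the other, and Jensen plus the triangle inequality for the three expectation bounds, with the $\min\{1,\gamma^{-2}\}$ factor exactly absorbing the $\gamma$ in the cross term) is the standard route and is essentially the proof given in the cited reference.
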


\section{Algorithms and Convergence}
Since we aim to minimize the smooth function $F_\gamma(x)$, the natural strategy is to perform gradient descent to update the variable $x$. Following from the properties of Moreau envelope, the gradient of $F_\gamma(x)$ is given by 
\begin{equation}\label{eqn:grad}
        \nabla F_\gamma (x) = \text{\colorbox{blue!15}{$\frac{1}{\gamma}(x-\prox_{\gamma \Phi}(x))$} }- \text{\colorbox{green!20}{$\frac{1}{\gamma}(x-\prox_{\gamma \Psi}(x))$} },
\end{equation}
where the blue component is the gradient of $\Phi_\gamma(x)$ and the green component is the gradient of $\Psi_\gamma(x)$. However, 
 the proximal points $\prox_{\gamma \Psi}(x)$ and $\prox_{\gamma \Phi}(x)$ are not accessible in general. Indeed, these proximal points are the optimal solutions to $\min_{x'}\{\Phi(x')+\frac{1}{2\gamma}\|x-x'\|^2\}$ and $\min_{x'}\{\Psi(x')+\frac{1}{2\gamma}\|x-x'\|^2\}$ respectively, and $\Phi(\cdot)$ and $\Psi(\cdot)$ are typically not accessible because they are the value functions of possibly sophisticated maximization problems. Thus, we maintain two variables $x_\phi^t$ and $x_\psi^t$ as the estimators of $\prox_{\gamma \Phi}(x_t)$ and $\prox_{\gamma \Psi}(x_t)$ respectively, and maintain another two variables $y_t$ and $z_t$ as the estimators of $\argmax_{y\in \Y}\phi(\prox_{\gamma \Phi}(x_t), y)$ and $\argmax_{z\in \Z}\psi(\prox_{\gamma \Psi}(x_t), z)$ respectively. At each iteration, we update $x_\phi^t$ and $x_\psi^t$ by one step of stochastic gradient descent, and update $y_t$ and $z_t$ by one step of stochastic gradient ascent. Finally, we compute the gradient estimator $G_{t+1} = \frac{1}{\gamma}(x_t - x_\phi^{t+1}) -\frac{1}{\gamma}(x_t - x_\psi^{t+1})$ of $\nabla F_\gamma (x_t)$ and update $x_t$ by one step of gradient descent. The resulting algorithm is presented in Algorithm~\ref{algo:1}.

\setlength{\textfloatsep}{5pt}
\begin{algorithm}[t]
\caption {Stochastic Moreau Envelope Approximate Gradient Method (SMAG)}
\begin{algorithmic}[1]\label{algo:1}
\STATE{\textbf{Input } Initial points: $x_\phi^0$, $x_\psi^0$, $x_0$, $y_0$, $z_0$. Hyper-parameters: $\gamma, \eta_0, \eta_1$. }
\STATE \quad \quad \quad Stochastic (sub)gradients: $\tilde{\partial}_x \phi$, $\tilde{\nabla}_y \phi$, $\tilde{\partial}_x \psi$, $\tilde{\nabla}_z \psi$.
\FOR{$t=0,\dots, T-1$}
\STATE  \colorbox{blue!15}{$x_\phi^{t+1} = x_\phi^{t} - \eta_1(\tilde{\partial}_x \phi(x_\phi^{t},y_t) +\frac{1}{\gamma} (x_\phi^{t}-x_t) ) $}
\STATE \colorbox{blue!15}{$y_{t+1} = P_\Y\big(y_t + \eta_1\tilde{\nabla}_y \phi(x_\phi^t,y_t)\big)$}

\STATE \colorbox{green!20}{$x_\psi^{t+1} = x_\psi^{t} - \eta_1(\tilde{\partial}_x \psi(x_\psi^{t},z_t) +\frac{1}{\gamma} (x_\psi^{t}-x_t) ) $}
\STATE \colorbox{green!20}{$z_{t+1} = P_\Z\big(z_t + \eta_1\tilde{\nabla}_z \psi(x_\psi^t,z_t)\big)$}
\STATE $G_{t+1} =$ \colorbox{blue!15}{$\frac{1}{\gamma}(x_t - x_\phi^{t+1})$} $ -$ \colorbox{green!20}{$\frac{1}{\gamma}(x_t - x_\psi^{t+1})$}
\STATE $x_{t+1} = x_t - \eta_0 G_{t+1}$
\ENDFOR
\STATE{\textbf{return} $x_\phi^{\bar{t}}$ or $x_\psi^{\bar{t}}$ with $\bar{t}$ uniformly sampled from $\{1, \dots, T\}$}
\end{algorithmic}
\end{algorithm}


\textbf{DWC Optimization.} 
For DWC problem (\ref{prob:dwc}), the associated functions $\Phi(\cdot) = \phi(\cdot)$ and $\Psi(\cdot)=\psi(\cdot)$ are directly accessible. Thus the variables $y_t$ and $z_t$ in SMAG are no longer needed. The simplified SMAG algorithm for DWC optimization is presented in Algorithm~\ref{algo:2}.

\textbf{WCSC Min-Max Optimization.} 
For WCSC Min-Max problem \eqref{prob:minmax}, the second component function $\Psi=0$ can be ignored, and thus variables $x_\psi^t$ and $z_t$ are no longer needed. However, this brings a change to the gradient of $F_\gamma(x_t)$ as it now becomes 
\begin{equation*}
    \nabla F_\gamma(x_t) = \gamma^{-1}(x_t - \prox_{\gamma \Phi}(x_t)).
\end{equation*}
The simplified SMAG algorithm for WCSC Min-Max optimization is presented in Algorithm~\ref{algo:3}.

\begin{minipage}{0.46\textwidth}
\begin{algorithm}[H]
\caption {SMAG for DWC Optimization}
\begin{algorithmic}[1]\label{algo:2}
\FOR{$t=0,\dots, T-1$}
\STATE $x_\phi^{t+1} = x_\phi^{t} - \eta_1(\tilde{\partial}_x \phi(x_\phi^{t}) +\frac{1}{\gamma} (x_\phi^{t}-x_t) ) $
\STATE $x_\psi^{t+1} = x_\psi^{t} - \eta_1(\tilde{\partial}_x \psi(x_\psi^{t}) +\frac{1}{\gamma} (x_\psi^{t}-x_t) ) $
\STATE $G_{t+1} = \frac{1}{\gamma}(x_\psi^{t+1} - x_\phi^{t+1})$
\STATE $x_{t+1} = x_t - \eta_0 G_{t+1}$
\ENDFOR
\STATE{\textbf{return} $x_\phi^{\bar{t}}$ or $x_\psi^{\bar{t}}$ with $\bar{t}\sim\{1, \dots, T\}$}
\end{algorithmic}
\end{algorithm}
\end{minipage}
   \hfill
\begin{minipage}{0.5\textwidth}
\begin{algorithm}[H]
\caption {SMAG for WCSC Min-Max Optimization}
\begin{algorithmic}[1]\label{algo:3}
\FOR{$t=0,\dots, T-1$}
\STATE $x_\phi^{t+1} = x_\phi^{t} - \eta_1(\tilde{\partial}_x \phi(x_\phi^{t},y_t) +\frac{1}{\gamma} (x_\phi^{t}-x_t) ) $
\STATE $y_{t+1} = P_\Y\big(y_t + \eta_1\tilde{\nabla}_y \phi(x_\phi^t,y_t)\big)$
\STATE $G_{t+1} =\frac{1}{\gamma}(x_t - x_\phi^{t+1})$ 
\STATE $x_{t+1} = x_t - \eta_0 G_{t+1}$
\ENDFOR
\STATE{\textbf{return} $x_{\bar{t}}$ with $\bar{t}\sim\{0, \dots, T-1\}$}
\end{algorithmic}
\end{algorithm}
\end{minipage}

\subsection{Convergence Analysis}

In this section, we present convergence results for Algorithms \ref{algo:1}-\ref{algo:3}. To proceed, we make the following assumption for DMax problem (\ref{prob:dmax}).
\begin{assumption}\label{ass:1}
Considering DMax problem (\ref{prob:dmax}), we assume that
    \begin{enumerate}[leftmargin=*, label=(\roman*)]
        \item $\phi(\cdot,y)$ is $\delta_\phi$-weakly convex, and $\psi(\cdot,z)$ is $\delta_\psi$-weakly convex.
        \item $\phi(x,\cdot)$ is $\mu_\phi$-strongly concave, and $\psi(x,\cdot)$ is $\mu_\psi$-strongly concave.
        \item $\phi(x,y)$ and $\psi(x,z)$ are differentiable in terms of $y$ and $z$ respectively, $\nabla_y\phi(\cdot,y)$ is $L_{\phi,yx}$-Lipschitz continuous, and  $\nabla_z\psi(\cdot,z)$ is $L_{\psi,zx}$-Lipschitz continuous.
        \item There exists a constant $F_\gamma^* >-\infty$ such that $F_\gamma^* \leq F_\gamma(x)$ for all $x$.
        \item There exists a finite constant $M$ such that $\E\|\tilde{\partial}_x\phi(x,y)\|^2\leq M^2$, $\E\|\tilde{\nabla}_y\phi(x,y)\|^2\leq M^2$, $\E\|\tilde{\partial}_x\psi(x,z)\|^2\leq M^2$, $\E\|\tilde{\nabla}_z\psi(x,z)\|^2\leq M^2$ for all $x\in \R^{d_x}$, $y\in \Y$ and $z\in \Z$.
    \end{enumerate}
\end{assumption}

It shall be noted that Assumption \ref{ass:1}(iii) only requires partial smoothness of $\phi$ and $\psi$, and is to ensure the Lipschitz continuity of $y^*(\cdot):=\argmax_{y\in \Y}\phi(\cdot,y)$ and $z^*(\cdot):=\argmax_{z\in \Z}\psi(\cdot,z)$. 
This follows from existing results. 
\begin{lemma}[Lemma 4.3 in \cite{pmlr-v119-lin20a}]\label{lem:3}
    Consider problem $\max_{y\in \hat{\Y}}f(x,y)$ for any $x \in \R^{d_x}$, where $\hat{\Y}\subset\R^{d_y}$ is a closed convex set. Assume that $f(x,y)$ is $\mu$-strongly concave in $y$ for each $x \in \R^{d_x}$, and $\nabla_y f(\cdot,y)$ is $L_{yx}$-Lipschitz for each $y \in \hat{\Y}$. Then $\argmax_{y} f(\cdot,y)$ is $\frac{L_{yx}}{\mu}$-Lipschitz continuous.
\end{lemma}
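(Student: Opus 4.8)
The plan is to argue directly from the first-order optimality (variational inequality) characterization of the constrained maximizer and combine it with strong concavity. Write $y_1 := \argmax_{y\in\hat\Y} f(x_1,y)$ and $y_2 := \argmax_{y\in\hat\Y} f(x_2,y)$; these are well-defined and unique because $f(x,\cdot)$ is $\mu$-strongly concave on the closed convex set $\hat\Y$. Since $y_i$ maximizes the smooth concave function $f(x_i,\cdot)$ over $\hat\Y$, the optimality condition reads $\langle \nabla_y f(x_i, y_i),\, y - y_i\rangle \le 0$ for every $y\in\hat\Y$ and each $i\in\{1,2\}$.

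First I would instantiate these two inequalities at the opposite maximizer: take $y = y_2$ in the condition at $x_1$, take $y = y_1$ in the condition at $x_2$, and add them. This yields the single scalar inequality $\langle \nabla_y f(x_1,y_1) - \nabla_y f(x_2,y_2),\, y_1 - y_2\rangle \ge 0$, which drives the whole estimate.

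The key step is to split the gradient difference across its two arguments, writing $\nabla_y f(x_1,y_1) - \nabla_y f(x_2,y_2) = [\nabla_y f(x_1,y_1) - \nabla_y f(x_2,y_1)] + [\nabla_y f(x_2,y_1) - \nabla_y f(x_2,y_2)]$. The second bracket is controlled by $\mu$-strong concavity of $f(x_2,\cdot)$, giving $\langle \nabla_y f(x_2,y_1) - \nabla_y f(x_2,y_2),\, y_1 - y_2\rangle \le -\mu\|y_1-y_2\|^2$; the first bracket is controlled by the $L_{yx}$-Lipschitz continuity of $\nabla_y f(\cdot, y_1)$ together with Cauchy--Schwarz, giving a bound of $L_{yx}\|x_1-x_2\|\,\|y_1-y_2\|$. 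Substituting both into the added optimality inequality produces $\mu\|y_1-y_2\|^2 \le L_{yx}\|x_1-x_2\|\,\|y_1-y_2\|$.

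Finally, dividing by $\|y_1-y_2\|$ when it is nonzero (the claim being trivial otherwise) gives $\|y_1-y_2\| \le \frac{L_{yx}}{\mu}\|x_1-x_2\|$, which is exactly the asserted $\frac{L_{yx}}{\mu}$-Lipschitz continuity. I do not expect a genuine obstacle, since the argument is elementary; the only points requiring care are writing the \emph{constrained} variational inequality correctly (rather than assuming the unconstrained stationarity $\nabla_y f = 0$) and keeping the signs straight when combining the two optimality conditions with the strong-concavity inequality.
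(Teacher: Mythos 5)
Your proposal is correct, and it is essentially the canonical argument: the paper itself does not prove this lemma but imports it verbatim as Lemma~4.3 of the cited reference, whose proof proceeds exactly as you do --- combine the two constrained first-order optimality (variational inequality) conditions, split the gradient difference into a cross-argument term bounded by $L_{yx}\|x_1-x_2\|\,\|y_1-y_2\|$ and a same-$x$ term bounded by $-\mu\|y_1-y_2\|^2$ via strong concavity, and cancel one factor of $\|y_1-y_2\|$. The only cosmetic remark is that you call $f(x_i,\cdot)$ ``smooth'' when all that is needed (and assumed) is differentiability in $y$; the variational inequality and the strong-monotonicity bound require nothing more.
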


A Lipschitz smooth function $f(x,y)$ is guaranteed to have Lipschitz continuous partial gradient $\nabla_y f(\cdot,y)$, while the reverse statement is not necessarily true. For example, consider a function $f(x,y) = y^\top h(x) - g(y)$ with non-smooth $C$-Lipschitz continuous $h(\cdot)$ and strongly convex $g$. Then $f(x,y)$ is non-smooth but the partial subgradient $\nabla_y f(\cdot,y) = h(\cdot) - \nabla g(y)$ is Lipschitz continuous with respect to the first argument. Another example is given by $f(x,y) = f_1(x)  + f_2(x, y)$, where $f_1$ is weakly convex and $f_2$ is smooth and strongly concave in terms of $y$. The latter is indeed seen in our considered application for pAUC maximization with adversarial fairness. In fact, one may replace Assumption~\ref{ass:1}(iii) by directly assuming that $y^*(\cdot)$ and $z^*(\cdot)$ are Lipschitz continuous. In addition, Assumption~\ref{ass:1}(v) is standard in non-smooth optimization literature~\cite{davis2018stochastic,DBLP:conf/icml/XuQLJY19,Hu2023NonSmoothWF}.

Here we give a brief outline of the convergence analysis. First of all, we present a standard result \cite{guo2022novel}.
\begin{lemma}\label{lem:4} 
Suppose that $F_\gamma(\cdot)$ is $L_F$-smooth and $x_{t+1} = x_t - \eta_0 G_{t+1}$ with $0<\eta_0\leq\frac{1}{2L_F}$. Then we have
\begin{equation*}
    F_\gamma(x_{t+1})
    \leq F_\gamma(x_t) +\frac{\eta_0}{2}\|\nabla F_\gamma(x_t) - G_{t+1}\|^2 -\frac{\eta_0}{2}\|\nabla F_\gamma (x_t)\|^2  - \frac{\eta_0}{4}\|G_{t+1}\|^2.
\end{equation*}
\end{lemma}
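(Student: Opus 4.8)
The plan is to derive this inequality from the standard descent property of $L_F$-smooth functions, which holds because Proposition~\ref{prop:1} guarantees $F_\gamma$ is $L_F$-smooth. First I would write down the quadratic upper bound
\begin{equation*}
    F_\gamma(x_{t+1}) \leq F_\gamma(x_t) + \langle \nabla F_\gamma(x_t), x_{t+1}-x_t\rangle + \frac{L_F}{2}\|x_{t+1}-x_t\|^2,
\end{equation*}
and then substitute the update rule $x_{t+1}-x_t = -\eta_0 G_{t+1}$ to obtain an expression in terms of $\nabla F_\gamma(x_t)$, the surrogate direction $G_{t+1}$, and the step size $\eta_0$.

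The key step is to rewrite the cross term $-\eta_0\langle \nabla F_\gamma(x_t), G_{t+1}\rangle$ using the polarization identity $\langle a,b\rangle = \tfrac{1}{2}\|a\|^2 + \tfrac{1}{2}\|b\|^2 - \tfrac{1}{2}\|a-b\|^2$ with $a=\nabla F_\gamma(x_t)$ and $b=G_{t+1}$. This is precisely what turns the inner product (which involves the unknown gradient error) into the three squared-norm terms appearing in the statement: the negative $\|\nabla F_\gamma(x_t)\|^2$ term that drives descent, the negative $\|G_{t+1}\|^2$ term, and the positive error term $\tfrac{\eta_0}{2}\|\nabla F_\gamma(x_t)-G_{t+1}\|^2$ that captures the bias/variance of the approximate gradient $G_{t+1}$.

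After this substitution I would collect the two coefficients multiplying $\|G_{t+1}\|^2$, namely the $-\tfrac{\eta_0}{2}$ coming from the polarization identity and the $+\tfrac{L_F\eta_0^2}{2}$ coming from the smoothness term, giving a combined coefficient $-\tfrac{\eta_0}{2}(1-L_F\eta_0)$. Invoking the step-size condition $\eta_0 \leq \tfrac{1}{2L_F}$ yields $1-L_F\eta_0 \geq \tfrac{1}{2}$, so this term is bounded above by $-\tfrac{\eta_0}{4}\|G_{t+1}\|^2$, which is exactly the desired form.

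This lemma is essentially a deterministic one-step descent estimate, so I do not anticipate a genuine obstacle; the entire argument is a routine chaining of the descent inequality, the polarization identity, and the step-size bound. The only point requiring minor care is tracking signs and coefficients when combining the $\|G_{t+1}\|^2$ contributions, and ensuring the constant $\tfrac{1}{2}$ margin from $\eta_0 \leq \tfrac{1}{2L_F}$ is applied correctly to recover the $\tfrac{1}{4}$ factor in the final bound.
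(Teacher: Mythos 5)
Your proof is correct and follows exactly the standard argument this lemma rests on: the paper itself states Lemma~\ref{lem:4} as a known result cited from the literature without reproducing a proof, and the intended derivation is precisely your chain of the $L_F$-smoothness descent inequality, the polarization identity applied to the cross term $-\eta_0\langle \nabla F_\gamma(x_t), G_{t+1}\rangle$, and the step-size condition $\eta_0 \le \tfrac{1}{2L_F}$ to absorb the $\tfrac{L_F\eta_0^2}{2}\|G_{t+1}\|^2$ term into $-\tfrac{\eta_0}{4}\|G_{t+1}\|^2$. Your coefficient bookkeeping $\bigl(-\tfrac{\eta_0}{2}(1-L_F\eta_0) \le -\tfrac{\eta_0}{4}\bigr)$ is exactly right, so there is nothing to correct.
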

This implies that the key to bounding the gradient $\|\nabla F_\gamma (x_t)\|^2$ is to obtain a recursive bound for the gradient estimation error $\|\nabla F_\gamma(x_t) - G_{t+1}\|^2$. Following from the true gradient formulation \ref{eqn:grad}, we have
\begin{equation}\label{ineq:grad_error}
    \begin{aligned}
        \|\nabla F_\gamma(x_t) - G_{t+1}\|^2\leq  \frac{2}{\gamma^2} \left(\|x_\phi^{t+1}- \prox_{\gamma \Phi}(x_t)  \|^2 + \| x_{\psi}^{t+1} - \prox_{\gamma \Psi}(x_t) \|^2\right).
    \end{aligned}
\end{equation}
In other words, the error of the gradient estimation $G_{t+1}$ can be bounded by the estimation errors of $x_\phi^{t+1}$ and $x_\psi^{t+1}$. Thus, we construct recursive bound for the proximal point estimation errors $\|x_\phi^{t+1}- \prox_{\gamma \Phi}(x_t)  \|^2$ and $\| x_{\psi}^{t+1} - \prox_{\gamma \Psi}(x_t) \|^2$ individually. In fact, these two errors share almost identical analysis due to similar assumptions and updates. Here we only present the result for function $\phi$, as the result for $\psi$ directly follows.

\begin{lemma}\label{lem:2}
Suppose that Assumption~\ref{ass:1} holds, $0<\gamma <1/ \delta_\phi$, and $ \eta_1 \leq \frac{\gamma^2 (1/\gamma - \delta_\phi)}{2}$.  Then the sequences $\{x_t\}$, $\{y_t\}$, $\{x_\phi^t\}$ and $\{G_t\}$ generated by Algorithm~\ref{algo:1} satisfy  
 \begin{equation*}
    \begin{aligned}
        &\E\|x_\phi^{t+1} - \prox_{\gamma \Phi}(x_{t})\|^2 + \E_t\|y_{t+1} - y^*(\prox_{\gamma \Phi}(x_{t}))\|^2\\
        &\leq (1-\frac{\eta_1 (1/\gamma - \delta_\phi)}{2})\E\| x_\phi^{t} - \prox_{\gamma \Phi}(x_{t-1})\|^2 + (1-\eta_1\mu_\phi)\E\|y_{t} - y^*(\prox_{\gamma \Phi}(x_{t-1}))\|^2\\
        &\quad  + \left(\frac{2\eta_0^2}{\eta_1 \gamma^2(1/\gamma - \delta_\phi)^3}+\frac{L_{\phi,yx}^2\eta_0^2}{\eta_1 \mu_\phi^3\gamma^2 (1/\gamma- \delta_\phi)^2}\right)\E\|G_{t}\|^2+ 12M^2\eta_1^2.
    \end{aligned}
\end{equation*}
\end{lemma}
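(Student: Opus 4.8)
The plan is to treat the coupled updates of $(x_\phi^t, y_t)$ as an inexact, single-step gradient iteration on the saddle point of the strongly-convex–strongly-concave surrogate $\phi(x', y) + \frac{1}{2\gamma}\|x' - x_t\|^2$, and to run a Lyapunov argument on the combined error $\|x_\phi^{t} - \prox_{\gamma\Phi}(x_{t-1})\|^2 + \|y_t - y^*(\prox_{\gamma\Phi}(x_{t-1}))\|^2$. Write $\mu := 1/\gamma - \delta_\phi > 0$, $\hat x_t := \prox_{\gamma\Phi}(x_t)$, and $\hat y_t := y^*(\hat x_t)$. Because the surrogate's $x$-part is $\mu$-strongly convex and its $y$-part is $\mu_\phi$-strongly concave, $(\hat x_t, \hat y_t)$ is its unique saddle point, and by Danskin's theorem together with the proximal optimality condition it satisfies $\partial_x\phi(\hat x_t,\hat y_t) + \frac{1}{\gamma}(\hat x_t - x_t) = 0$ and $\hat y_t = P_\Y(\hat y_t + \eta_1\nabla_y\phi(\hat x_t,\hat y_t))$. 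Note that the recursion compares $x_\phi^{t+1}, y_{t+1}$ (whose natural targets are $\hat x_t, \hat y_t$, the surrogate built from the current $x_t$) with $x_\phi^{t}, y_t$ (natural targets $\hat x_{t-1}, \hat y_{t-1}$), so a separate target-shift step will be required at the end.

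First I would carry out the primal and dual one-step expansions toward the current saddle point. Expanding $\|x_\phi^{t+1} - \hat x_t\|^2$ and $\|y_{t+1} - \hat y_t\|^2$, taking the conditional expectation $\E_t$, and substituting the fixed-point identities above rewrites the linear terms as inner products of subgradient/gradient differences with the errors. For the diagonal parts I would use $\delta_\phi$-weak convexity of $\phi(\cdot,y_t)$ (monotonicity of $x\mapsto\partial_x\phi(x,y_t)+\delta_\phi x$) to extract the $\mu$-contraction of the proximal map, and $\mu_\phi$-strong concavity of $\phi(\hat x_t,\cdot)$ to extract the $\mu_\phi$-contraction of the dual, with the projection $P_\Y$ handled by nonexpansiveness. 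The stochastic second moments $\E_t\|\tilde\partial_x\phi(x_\phi^t,y_t)+\tfrac{1}{\gamma}(x_\phi^t - x_t)\|^2$ and $\E_t\|\tilde\nabla_y\phi(x_\phi^t,y_t)\|^2$ are bounded through Assumption~\ref{ass:1}(v) after splitting off the deterministic term $\frac{1}{\gamma}(x_\phi^t - \hat x_t)$ (whose companion $\frac1\gamma(x_t - \hat x_t) = \partial_x\phi(\hat x_t,\hat y_t)$ is a subgradient, hence bounded by $M$); collecting these $O(\eta_1^2 M^2)$ contributions ultimately produces the constant $12M^2\eta_1^2$.

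The central step is combining the two one-step inequalities so that the off-diagonal terms coupling the primal and dual errors — the dependence of $\partial_x\phi(\hat x_t,\cdot)$ on $y$ and of $\nabla_y\phi(\cdot,y_t)$ on $x$ — are absorbed rather than allowed to spoil the contraction. Here I would use the Lipschitz continuity of the partial gradients (Assumption~\ref{ass:1}(iii), together with the corresponding Lipschitz property of the $x$-subgradient in $y$, which holds with the same constant in the structured examples discussed after the assumption) and Young's inequality, and crucially the step-size restriction $\eta_1 \le \frac{\gamma^2\mu}{2}$, which forces the $\eta_1^2/\gamma^2$ self-term and the cross terms to be dominated by the $\mu$- and $\mu_\phi$-contractions, to arrive at contraction factors $1 - \frac{\eta_1\mu}{2}$ on $\|x_\phi^t - \hat x_t\|^2$ and $1 - \eta_1\mu_\phi$ on $\|y_t - \hat y_t\|^2$. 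I expect this balancing of the two-way coupling to be the main obstacle: the strong-convexity/concavity budget must simultaneously cover the Young losses of both cross terms while still leaving the stated contraction, and this is exactly where the asymmetry (half of $\mu$ but the full $\mu_\phi$) and the precise form of the step-size condition originate.

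Finally, to replace the current targets $\hat x_t, \hat y_t$ by the previous-iterate targets $\hat x_{t-1}, \hat y_{t-1}$ appearing on the right-hand side, I would apply Young's inequality $\|a\|^2 \le (1+\beta)\|b\|^2 + (1+\beta^{-1})\|a-b\|^2$ with $\beta \sim \eta_1\mu$ (resp.\ $\beta \sim \eta_1\mu_\phi$), so that the inflated contraction factor stays below $1 - \frac{\eta_1\mu}{2}$ (resp.\ $1 - \eta_1\mu_\phi$) and the remainder becomes a drift term. The drift $\|\hat x_t - \hat x_{t-1}\|^2$ is bounded by the $\frac{1}{1-\gamma\delta_\phi} = \frac{1}{\gamma\mu}$-Lipschitz continuity of $\prox_{\gamma\Phi}$ with $x_t - x_{t-1} = -\eta_0 G_t$, giving $\frac{\eta_0^2}{\gamma^2\mu^2}\|G_t\|^2$, and $\|\hat y_t - \hat y_{t-1}\|^2$ by Lemma~\ref{lem:3} ($y^*$ is $\frac{L_{\phi,yx}}{\mu_\phi}$-Lipschitz), giving $\frac{L_{\phi,yx}^2}{\mu_\phi^2}\|\hat x_t - \hat x_{t-1}\|^2$; the $\beta^{-1}\sim(\eta_1\mu)^{-1}$ and $(\eta_1\mu_\phi)^{-1}$ factors then reproduce exactly the two stated $\E\|G_t\|^2$ coefficients. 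A secondary delicate point is making this target-shift produce precisely those drift constants without leaking extra terms into the contraction. Taking total expectation, and noting that the analysis for $\psi$ is identical, completes the argument.
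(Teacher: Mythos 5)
Your overall skeleton matches the paper's proof: one-step expansions of $\|x_\phi^{t+1}-\hat x_t\|^2$ and $\|y_{t+1}-\hat y_t\|^2$ toward the saddle point of $\Phi_t(x,y)=\phi(x,y)+\tfrac{1}{2\gamma}\|x-x_t\|^2$, the fixed-point identities for $(\hat x_t,\hat y_t)$, variance bounds of order $M^2\eta_1^2$, and a final target shift from $(\hat x_t,\hat y_t)$ to $(\hat x_{t-1},\hat y_{t-1})$ via Young's inequality, with the drift bounded by the $\tfrac{1}{1-\gamma\delta_\phi}$-Lipschitzness of $\prox_{\gamma\Phi}$ and the $\tfrac{L_{\phi,yx}}{\mu_\phi(1-\gamma\delta_\phi)}$-Lipschitzness of $y^*(\prox_{\gamma\Phi}(\cdot))$ composed with $x_t-x_{t-1}=-\eta_0 G_t$. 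That last part is exactly right and reproduces the stated $\E\|G_t\|^2$ coefficients.

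However, your central step has a genuine gap. To absorb the primal--dual cross terms you invoke ``the corresponding Lipschitz property of the $x$-subgradient in $y$,'' conceding that it only ``holds with the same constant in the structured examples discussed after the assumption.'' That property is \emph{not} part of Assumption~\ref{ass:1}: item (iii) only gives Lipschitzness of $\nabla_y\phi(\cdot,y)$ in $x$, and $\phi(\cdot,y)$ need not even be differentiable in $x$ (it is merely weakly convex with bounded stochastic subgradients), so no Lipschitz modulus for $\partial_x\phi(x,\cdot)$ in $y$ exists in general. A Young-inequality absorption of the coupling therefore cannot be carried out under the lemma's hypotheses. The paper avoids this entirely by never forming gradient cross terms: it bounds the linear term $(A)=\langle\partial_x\Phi_t(x_\phi^t,y_t),\hat x_t-x_\phi^t\rangle$ by the \emph{function-value} form of strong convexity (once as the gradient inequality at $y_t$, once as optimality of $\hat x_t$ for $\Phi_t(\cdot,\hat y_t)$), and symmetrically bounds the dual term $(B)$ by strong concavity at $x_\phi^t$ and at $\hat x_t$. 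Summing, the four function-value differences
\begin{equation*}
\Phi_t(\hat x_t,y_t)-\Phi_t(x_\phi^t,y_t)+\Phi_t(x_\phi^t,\hat y_t)-\Phi_t(\hat x_t,\hat y_t)
\end{equation*}
and their negatives cancel exactly, leaving $(A)+(B)\le -(1/\gamma-\delta_\phi)\|\hat x_t-x_\phi^t\|^2-\mu_\phi\|\hat y_t-y_t\|^2$ with no coupling constant at all; $L_{\phi,yx}$ enters only through the dual target drift in the shift step. Incidentally, the asymmetry you attribute to cross-term Young losses (factor $\tfrac12$ on $1/\gamma-\delta_\phi$ but not on $\mu_\phi$) actually comes from the $\eta_1^2/\gamma^2$ variance self-term on the primal side being absorbed via the step-size restriction, plus the target-shift inflation — not from any coupling. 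To repair your argument, replace the monotonicity/Young treatment of the coupling with this saddle-point cancellation.
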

Finally, combining Lemma~\ref{lem:4}, inequality~(\ref{ineq:grad_error}) and Lemma~\ref{lem:2} yields the following convergence result for Algorithm~\ref{algo:1}.


\begin{theorem}\label{thm:1}
Suppose that Assumption~\ref{ass:1} holds, $0<\gamma<\min\{\delta_\phi^{-1}, \delta_\psi^{-1}\}$, $\eta_1 = \O(\epsilon^2)$, and $\eta_0 = \tau\eta_1$. Then after $T\geq\O(\epsilon^{-4})$ iterations, the sequences $\{x_t\}$, 
$\{x_\phi^t\}$ and $\{x_\psi^t\}$ generated by Algorithm~\ref{algo:1} satisfy $\E[\|x_\phi^{\bar{t}} - \prox_{\gamma \Phi}(x_{\bar{t}-1})\|^2+\|x_\psi^{\bar{t}} - \prox_{\gamma \Psi}(x_{\bar{t}-1})\|^2+\|\nabla F_\gamma (x_{\bar{t}-1})\|^2]\leq \min\{1,\gamma^{-2}\} \epsilon^2/4$, and the outputs $x_\phi^{\bar{t}}$ and $x_\psi^{\bar{t}}$ are both nearly $\epsilon$-critical points of problem~(\ref{prob:dmax}).
\end{theorem}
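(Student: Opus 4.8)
The plan is to couple the descent of the smooth surrogate $F_\gamma$ with the recursive control of the proximal-point estimation errors, and then exploit the negative $\|G_{t+1}\|^2$ term in Lemma~\ref{lem:4} to absorb the positive $\|G_t\|^2$ term that Lemma~\ref{lem:2} introduces. First I would sum the descent inequality of Lemma~\ref{lem:4} over $t=0,\dots,T-1$, telescoping the $F_\gamma$ values against the lower bound $F_\gamma^*$ from Assumption~\ref{ass:1}(iv), and immediately replace the gradient-estimation-error term via inequality~(\ref{ineq:grad_error}), so that it is controlled by $A_{t+1}:=\E\|x_\phi^{t+1}-\prox_{\gamma \Phi}(x_t)\|^2$ and $C_{t+1}:=\E\|x_\psi^{t+1}-\prox_{\gamma \Psi}(x_t)\|^2$. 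This yields a summed bound in which the only remaining unknowns are the accumulated proximal errors and the accumulated $\|G\|^2$.

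Next I would control the proximal errors. Writing $S_t^\phi=\E\|x_\phi^t-\prox_{\gamma \Phi}(x_{t-1})\|^2+\E\|y_t-y^*(\prox_{\gamma \Phi}(x_{t-1}))\|^2$ and the analogous $S_t^\psi$, Lemma~\ref{lem:2} (and its $\psi$ counterpart) gives a contraction of the form $S_{t+1}^\phi\le(1-\rho_\phi\eta_1)S_t^\phi+K_\phi\tau^2\eta_1\E\|G_t\|^2+12M^2\eta_1^2$, where $\rho_\phi=\min\{(1/\gamma-\delta_\phi)/2,\mu_\phi\}$ and, after substituting $\eta_0=\tau\eta_1$, the constant $K_\phi$ collects the factors multiplying $\E\|G_t\|^2$. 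Telescoping this recursion produces $\sum_t S_t^\phi\le\rho_\phi^{-1}\eta_1^{-1}S_0^\phi+(K_\phi\tau^2/\rho_\phi)\sum_t\E\|G_t\|^2+12M^2\eta_1 T/\rho_\phi$, and likewise for $\psi$; since $A_{t+1}\le S_{t+1}^\phi$ and $C_{t+1}\le S_{t+1}^\psi$, these feed directly into the summed descent inequality (up to a harmless one-index shift whose boundary terms $S_T^\phi,S_T^\psi$ are uniformly bounded).

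The crux is the step-size balancing. Substituting the telescoped bounds into the descent inequality produces a coefficient $\tfrac{2K\tau^2\eta_0}{\gamma^2\rho}$ (with $\rho=\min\{\rho_\phi,\rho_\psi\}$, $K=\max\{K_\phi,K_\psi\}$) in front of $\sum_t\E\|G_t\|^2$, which I would force below $\tfrac{\eta_0}{8}$ by taking $\tau\le\gamma\sqrt{\rho}/(4\sqrt{K})$, a constant independent of $\epsilon$. The term $-\tfrac{\eta_0}{4}\sum\E\|G_{t+1}\|^2$ of Lemma~\ref{lem:4} then dominates it (again modulo the one-index shift, whose endpoint contributions $\|G_0\|^2,\|G_T\|^2$ are $\O(M^2)$), and dividing by $\eta_0 T/2$ leaves
\[
\frac{1}{T}\sum_{t=0}^{T-1}\E\|\nabla F_\gamma(x_t)\|^2 \le \frac{2(F_\gamma(x_0)-F_\gamma^*)}{\eta_0 T} + \frac{2(S_0^\phi+S_0^\psi)}{\gamma^2\rho\,\eta_1 T} + \frac{48M^2\eta_1}{\gamma^2\rho}.
\]
Reading $\tfrac1T\sum_t\E\|G_t\|^2=\O(\eta_0^{-1}T^{-1}+M^2\eta_1)$ off the same inequality and dividing the telescoped recursions by $T$ gives $\tfrac1T\sum_t(A_{t+1}+C_{t+1})=\O(\eta_1^{-1}T^{-1}+\eta_1)$. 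With $\eta_1=\O(\epsilon^2)$, $\eta_0=\tau\eta_1$, and $T=\O(\epsilon^{-4})$, every right-hand side is $\O(\epsilon^2)$; tuning the hidden constants makes the sum of the three averaged quantities at most $\min\{1,\gamma^{-2}\}\epsilon^2/4$. Averaging over the uniformly sampled index $\bar t$ converts these averages into the stated expectation, and Lemma~\ref{lem:1} (its $\Phi$-branch for $x_\phi^{\bar t}$ and $\Psi$-branch for $x_\psi^{\bar t}$) certifies both outputs as nearly $\epsilon$-critical.

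I expect the main obstacle to be exactly this circular coupling: the descent of $F_\gamma$ needs the gradient-estimation error, which is precisely the proximal error, yet the proximal-error recursion feeds the optimization signal $\|G_t\|^2$ back in. Resolving it hinges on choosing the single scalar $\tau$ small enough that the negative quadratic term of Lemma~\ref{lem:4} strictly overpowers the accumulated positive quadratic terms of Lemma~\ref{lem:2}; the remaining bookkeeping—the one-index shift and the verification that all three error quantities, not merely the gradient, are simultaneously $\O(\epsilon^2)$—is routine by comparison.
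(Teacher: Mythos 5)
Your proposal is correct and follows essentially the same route as the paper's proof: it combines Lemma~\ref{lem:4} with inequality~(\ref{ineq:grad_error}), feeds in the contraction recursions of Lemmas~\ref{lem:2} and~\ref{lem:5}, cancels the accumulated $\E\|G_t\|^2$ terms against the negative $-\frac{\eta_0}{4}\E\|G_{t+1}\|^2$ term by taking $\tau$ to be a sufficiently small constant, and then invokes uniform sampling of $\bar t$ together with Lemma~\ref{lem:1} (both branches) to certify $x_\phi^{\bar t}$ and $x_\psi^{\bar t}$ as nearly $\epsilon$-critical. The only difference is organizational: the paper packages the four tracking errors into a per-iteration potential $F_\gamma(x_t)+P_t$ and telescopes a single Lyapunov-type inequality, whereas you sum the descent inequality and the error recursions separately and then combine them—algebraically the same argument, and your boundary-term worries (the index shift and $\|G_0\|^2$, $S_T$) are in fact automatically handled by the recursion itself.
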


Since DMax optimization is a unified framework covering DWC optimization and WCSC min-max optimization, the convergence results of Algorithms~\ref{algo:2} and \ref{algo:3} directly follow from Theorem~\ref{thm:1}. To present them, we first provide a reduced version of Assumption~\ref{ass:1} for DWC problem \eqref{prob:dwc}.


\begin{assumption}\label{ass:2}
Considering DWC problem~(\ref{prob:dwc}), we assume that
    \begin{enumerate}[leftmargin=*, label=(\roman*)]
        \item $\phi(\cdot)$ is $\delta_\phi$-weakly convex, and $\psi(\cdot)$ is $\delta_\psi$-weakly convex.
        \item There exists a constant $F_\gamma^* >-\infty$ such that $F_\gamma^* \leq F_\gamma(x)$ for all $x$.
        \item There exists a finite constant $M$ such that $\E\|\tilde{\partial}\phi(x)\|^2\leq M^2$ and $\E\|\tilde{\partial}\psi(x)\|^2\leq M^2$ for all $x\in \R^{d_x}$.
    \end{enumerate}
\end{assumption}

By setting $\phi(x,y)=\phi(x)$ and $\psi(x,z)=\psi(x)$, namely independent of $y$ and $z$, in DMax problem \eqref{prob:dmax}, we obtain the following convergence result for Algorithm~\ref{algo:2}, which is an immediate consequence of Theorem \ref{thm:1}. 

\begin{corollary}\label{cor:1}
Suppose that Assumption~\ref{ass:2} holds, $0<\gamma<\min\{\delta_\phi^{-1}, \delta_\psi^{-1}\}$, $\eta_1 = \O(\epsilon^2)$, and $\eta_0 = \tau\eta_1$. Then after $T\geq\O(\epsilon^{-4})$ iterations, the outputs $x_\phi^{\bar{t}}$ and $x_\psi^{\bar{t}}$ of Algorithm \ref{algo:2} are both nearly $\epsilon$-critical points of problem~(\ref{prob:dwc}).
\end{corollary}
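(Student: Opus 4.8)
The plan is to prove Corollary~\ref{cor:1} as a direct specialization of Theorem~\ref{thm:1}, following the substitution indicated just before the statement: embed the DWC problem~\eqref{prob:dwc} into the DMax template~\eqref{prob:dmax} by declaring the component functions to be independent of the dual variables, i.e.\ $\phi(x,y):=\phi(x)$ for all $y\in\Y$ and $\psi(x,z):=\psi(x)$ for all $z\in\Z$. Under this choice the value functions collapse, $\Phi(x)=\max_{y\in\Y}\phi(x)=\phi(x)$ and $\Psi(x)=\max_{z\in\Z}\psi(x)=\psi(x)$, so that the target of Theorem~\ref{thm:1}---nearly $\epsilon$-critical points of $\min_x\{\Phi(x)-\Psi(x)\}$---coincides exactly with the nearly $\epsilon$-critical criterion for~\eqref{prob:dwc}. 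The step-size and $\gamma$ conditions in the corollary are identical to those in Theorem~\ref{thm:1}, so they transfer with no adjustment.

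The second step is to check that Algorithm~\ref{algo:1}, run on this instance, reproduces Algorithm~\ref{algo:2} iteration by iteration. Because $\phi$ (resp.\ $\psi$) does not depend on $y$ (resp.\ $z$), the stochastic partial gradients $\tilde\nabla_y\phi$ and $\tilde\nabla_z\psi$ are identically zero, so the dual updates reduce to $y_{t+1}=P_\Y(y_t)=y_t$ and $z_{t+1}=P_\Z(z_t)=z_t$; the dual iterates never move from their initialization. Consequently $\tilde\partial_x\phi(x_\phi^t,y_t)=\tilde\partial\phi(x_\phi^t)$ and $\tilde\partial_x\psi(x_\psi^t,z_t)=\tilde\partial\psi(x_\psi^t)$, so the $x_\phi$- and $x_\psi$-updates match those of Algorithm~\ref{algo:2}, and the estimator $G_{t+1}=\frac1\gamma(x_t-x_\phi^{t+1})-\frac1\gamma(x_t-x_\psi^{t+1})$ collapses to $\frac1\gamma(x_\psi^{t+1}-x_\phi^{t+1})$, precisely the $G_{t+1}$ used in Algorithm~\ref{algo:2}. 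Thus the two algorithms generate identical $\{x_t\},\{x_\phi^t\},\{x_\psi^t\}$ sequences, and it remains only to confirm that the hypotheses of Theorem~\ref{thm:1} hold on this instance. Assumption~\ref{ass:2}(i),(ii),(iii) supply Assumption~\ref{ass:1}(i),(iv),(v) verbatim, and Assumption~\ref{ass:1}(iii) holds with $L_{\phi,yx}=L_{\psi,zx}=0$ since $\nabla_y\phi\equiv0$ and $\nabla_z\psi\equiv0$ are $0$-Lipschitz in $x$.

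I expect the only delicate point---and hence the main obstacle---to be Assumption~\ref{ass:1}(ii): a function constant in its dual argument is concave but not $\mu_\phi$- or $\mu_\psi$-strongly concave, so that hypothesis fails literally. The way around it is to observe that strong concavity enters the proof of Theorem~\ref{thm:1} only through the dual tracking error. Concretely, in Lemma~\ref{lem:2} the terms gated by $\mu_\phi$ and $L_{\phi,yx}$ multiply $\E\|y_t-y^*(\prox_{\gamma\Phi}(x_{t-1}))\|^2$ (and its $\psi$-analogue). Since $\phi$ is constant in $y$, I would take the maximizer map $y^*(\cdot)\equiv y_0$ (a legitimate, $0$-Lipschitz selection, so Lemma~\ref{lem:3} is not invoked), and because the dual iterates are frozen at $y_0$ the tracking error $\|y_t-y^*(\cdot)\|$ is identically zero for every $t$. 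Hence every strong-concavity- or Lipschitz-gated term either vanishes or is multiplied by zero, and the recursion of Lemma~\ref{lem:2}, combined with Lemma~\ref{lem:4} and the estimator bound~\eqref{ineq:grad_error}, carries through unchanged.

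Assembling these pieces yields, after $T\ge\O(\epsilon^{-4})$ iterations, the same guarantee $\E[\|x_\phi^{\bar t}-\prox_{\gamma\Phi}(x_{\bar t-1})\|^2+\|x_\psi^{\bar t}-\prox_{\gamma\Psi}(x_{\bar t-1})\|^2+\|\nabla F_\gamma(x_{\bar t-1})\|^2]\le\min\{1,\gamma^{-2}\}\epsilon^2/4$ as in Theorem~\ref{thm:1}, and Lemma~\ref{lem:1} then certifies that both $x_\phi^{\bar t}$ and $x_\psi^{\bar t}$ are nearly $\epsilon$-critical points of~\eqref{prob:dwc}. If one prefers to avoid the degeneracy of Assumption~\ref{ass:1}(ii) altogether, an equivalent and fully rigorous alternative is to re-derive the single-variable analogue of Lemma~\ref{lem:2}---simply dropping the dual tracking term---directly for Algorithm~\ref{algo:2}, whose updates contain no inner maximization; the resulting recursion is strictly simpler and gives the identical rate.
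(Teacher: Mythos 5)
Your proposal matches the paper's proof: the paper likewise obtains Corollary~\ref{cor:1} by setting $\phi(x,y)=\phi(x)$ and $\psi(x,z)=\psi(x)$ in problem~\eqref{prob:dmax}, observing that Algorithm~\ref{algo:2} is the corresponding specialization of Algorithm~\ref{algo:1}, and invoking Theorem~\ref{thm:1} as an immediate consequence. Your extra care about Assumption~\ref{ass:1}(ii) is warranted and goes beyond the paper's one-line argument: a function constant in $y$ is indeed not strongly concave, and the paper addresses this only implicitly---its detailed statement (Corollary~\ref{cor:1_full}) quietly redefines $\alpha$ and $\tau$ so that the $\mu_\phi,\mu_\psi$ and $L_{\phi,yx},L_{\psi,zx}$ terms disappear---which is precisely the degeneracy you resolve explicitly by noting that the frozen dual iterates make every strong-concavity-gated term vanish.
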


For WCSC min-max problem \eqref{prob:minmax}, we reduce Assumption~\ref{ass:1} to the following.

\begin{assumption}\label{ass:3}
Considering WCSC min-max problem~(\ref{prob:minmax}), we assume that
    \begin{enumerate}[leftmargin=*, label=(\roman*)]
        \item $\phi(\cdot,y)$ is $\delta_\phi$-weakly convex, and $\phi(x,\cdot)$ is $\mu_\phi$-strongly concave.
        \item $\phi(x,y)$ is differentiable in terms of $y$, and $\nabla_y\phi(\cdot,y)$ is $L_{\phi,yx}$-Lipschitz continuous.  
        \item There exists a constant $F_\gamma^* >-\infty$ such that $F_\gamma^* \leq F_\gamma(x)$ for all $x$.
        \item There exists a finite constant $M$ such that $\E\|\tilde{\partial}_x\phi(x,y)\|^2\leq M^2$ and $\E\|\tilde{\nabla}_y\phi(x,y)\|^2\leq M^2$ for all $x\in \R^{d_x}$ and $y\in \Y$.
    \end{enumerate}
\end{assumption}

By setting $\psi(x,z)=0$ in DMax problem \eqref{prob:dmax}, we obtain the following convergence result for Algorithm~\ref{algo:3}, which is an immediate consequence of Theorem \ref{thm:1}. 
\begin{corollary}\label{cor:2}
Suppose that Assumption~\ref{ass:3} holds, $0<\gamma < 1/\delta_\phi$, $\eta_1 = \O(\epsilon^2)$, and $\eta_0 = \tau\eta_1$, Then after $T\geq \O(\epsilon^{-4})$ iterations, the output $x_{\bar{t}}$ of Algorithm~\ref{algo:3} is a nearly $\epsilon$-stationary point of problem~(\ref{prob:minmax}).
\end{corollary}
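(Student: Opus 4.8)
The plan is to obtain Corollary~\ref{cor:2} as a direct specialization of Theorem~\ref{thm:1} to $\psi \equiv 0$, followed by a translation of the Moreau-envelope gradient bound into the nearly $\epsilon$-stationary condition. First I would observe that setting $\psi(x,z)\equiv 0$ in~\eqref{prob:dmax} makes $\Psi \equiv 0$, hence $\Psi_\gamma \equiv 0$ and $\nabla\Psi_\gamma \equiv 0$. The true gradient then reduces to $\nabla F_\gamma(x)=\gamma^{-1}(x-\prox_{\gamma\Phi}(x))=\nabla\Phi_\gamma(x)$, the estimator to $G_{t+1}=\gamma^{-1}(x_t-x_\phi^{t+1})$, and the entire $\psi$-block of Algorithm~\ref{algo:1} (the variables $x_\psi^t, z_t$ and their updates) drops out, leaving exactly Algorithm~\ref{algo:3}. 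Correspondingly, the error bound~\eqref{ineq:grad_error} collapses to the single identity $\|\nabla F_\gamma(x_t)-G_{t+1}\|^2=\gamma^{-2}\|x_\phi^{t+1}-\prox_{\gamma\Phi}(x_t)\|^2$, with the $\psi$-term absent. Since Assumption~\ref{ass:3} is precisely Assumption~\ref{ass:1} stripped of its $\psi$-components, and the zero function is $0$-weakly convex (so $\delta_\psi=0$), the constraint $0<\gamma<\min\{\delta_\phi^{-1},\delta_\psi^{-1}\}$ degenerates to $0<\gamma<\delta_\phi^{-1}$, matching the corollary's hypothesis.

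Next I would invoke Theorem~\ref{thm:1} under this specialization. Because the $\psi$-analysis contributed only additively through~\eqref{ineq:grad_error} and the $\psi$-counterpart of Lemma~\ref{lem:2}, removing it leaves the $\phi$-analysis and its telescoping argument intact while eliminating the middle term $\|x_\psi^{\bar t}-\prox_{\gamma\Psi}(x_{\bar t-1})\|^2$. The theorem thus yields, after $T\geq\O(\epsilon^{-4})$ iterations with $\eta_1=\O(\epsilon^2)$ and $\eta_0=\tau\eta_1$,
\begin{equation*}
\E\big[\|x_\phi^{\bar t}-\prox_{\gamma\Phi}(x_{\bar t-1})\|^2+\|\nabla F_\gamma(x_{\bar t-1})\|^2\big]\leq \min\{1,\gamma^{-2}\}\,\epsilon^2/4,
\end{equation*}
so in particular $\E\|\nabla F_\gamma(x_{\bar t-1})\|^2\leq\min\{1,\gamma^{-2}\}\epsilon^2/4$ for $\bar t$ uniform on $\{1,\dots,T\}$. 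The uniform average defining $x_{\bar t-1}$ over $\bar t\in\{1,\dots,T\}$ equals the uniform average defining the output $x_{\bar t}$ of Algorithm~\ref{algo:3} over $\bar t\in\{0,\dots,T-1\}$, so the returned iterate satisfies $\E\|\nabla F_\gamma(x_{\bar t})\|^2\leq\min\{1,\gamma^{-2}\}\epsilon^2/4$.

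Finally I would convert this into the nearly $\epsilon$-stationary guarantee via the Moreau-envelope identities of~\cite{davis2018stochastic} recalled in the preliminaries. Setting $\hat x=\prox_{\gamma\Phi}(x_{\bar t})$ and using $F=\Phi$ (weakly convex by Danskin), these give $\|x_{\bar t}-\hat x\|=\gamma\|\nabla\Phi_\gamma(x_{\bar t})\|=\gamma\|\nabla F_\gamma(x_{\bar t})\|$ and $\dist(0,\partial\Phi(\hat x))\leq\|\nabla F_\gamma(x_{\bar t})\|$. By Jensen's inequality the gradient bound yields $\E\|\nabla F_\gamma(x_{\bar t})\|\leq \tfrac12\min\{1,\gamma^{-1}\}\epsilon$, whence $\E\,\dist(0,\partial\Phi(\hat x))\leq\tfrac12\epsilon\leq\epsilon$ and $\E\|x_{\bar t}-\hat x\|=\gamma\,\E\|\nabla F_\gamma(x_{\bar t})\|\leq\tfrac12\min\{\gamma,1\}\epsilon\leq\epsilon$; together these certify that $x_{\bar t}$ is a nearly $\epsilon$-stationary point of~\eqref{prob:minmax}. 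The only point requiring care is the bookkeeping of the $\min\{1,\gamma^{-2}\}$ factor: the $\gamma$ appearing in $\|x_{\bar t}-\hat x\|=\gamma\|\nabla F_\gamma(x_{\bar t})\|$ forces the stricter $\gamma^{-2}$ scaling when $\gamma>1$, and it is exactly this factor---the same device employed in Lemma~\ref{lem:1}---that drives both the distance-to-proximal-point and the subgradient norm simultaneously below $\epsilon$. Beyond this elementary accounting there is no genuine obstacle, the result being an immediate specialization of Theorem~\ref{thm:1}.
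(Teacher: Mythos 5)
Your proposal is correct and follows essentially the same route as the paper: the paper likewise proves Corollary~\ref{cor:2} by rerunning the argument of Theorem~\ref{thm:1} with the $\psi$-block deleted, the only substantive change being that inequality~\eqref{ineq:34} collapses to the exact identity $\|\nabla F_\gamma(x_t)-G_{t+1}\|^2=\gamma^{-2}\|\prox_{\gamma\Phi}(x_t)-x_\phi^{t+1}\|^2$ (which is why the paper's detailed version carries a constant $384$ rather than $768$). Your additional bookkeeping --- the re-indexing of the output distribution and the explicit conversion of the Moreau-envelope gradient bound into the nearly $\epsilon$-stationary condition via $\|x_{\bar t}-\hat x\|=\gamma\|\nabla F_\gamma(x_{\bar t})\|$ and $\dist(0,\partial\Phi(\hat x))\le\|\nabla F_\gamma(x_{\bar t})\|$ --- is exactly what the paper leaves implicit, and it is handled correctly.
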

It shall be mentioned that for WCSC min-max problem~(\ref{prob:minmax}, we use nearly $\epsilon$-stationary point as the convergence metric. This is standard in weakly-convex optimization literature~\cite{davis2018stochastic}.

\section{Applications}
In this section, we introduce two applications of DMax optimization, PU learning for DWC optimization and partial AUC optimization with adversarial fairness regularization for WCSC min-max optimization. We also show experimental results on both applications.

\subsection{Positive-Unlabeled Learning}
In binary classification task, the optimization problem is commonly formulated as the minimization of empirical risk, i.e., $\min_{\w \in \R^d}\frac{1}{|\S|}\sum_{\x_i\in \S} \ell(\w;\x_i,y_i)$ where $\ell(\w;\x_i,y_i)$ is the loss  given the model parameter $\w$ on a data point $\x_i$ and its ground truth label $y_i$. Given the scenario where only positive data $\S_+$ are observed, then the standard approach becomes problematic. One way to address this issue is to utilize unlabeled data $\S_u$ to construct unbiased risk estimators. To be specific, \cite{Kiryo2017PositiveUnlabeledLW} formulated the PU learning problem as following
\begin{equation}\label{prob:pu} 
    \min_{\w\in \R^d} \frac{\pi_p}{n_+} \sum_{\x_i\in \S_+}\left[\ell(\w;\x_i,+1) - \ell(\w;\x_i,-1)\right] + \frac{1}{n_u}\sum_{\x_j^u \in \S_u} \ell(\w;x_j^u, -1)
\end{equation}
where $n_+ = |\S_+|$, $n_u = |\S_u|$, $\pi_p = Pr(y=1)$ is the prior probability of the positive class. If $\ell(\w; \x,y)$ is weakly convex in terms of $\w$, then Problem (\ref{prob:pu}) is a DWC problems. In particular, in our experiments we consider linear classification model and hinge loss.

\textbf{Baselines.} We implemented five baselines and compared them with our proposed method SMAG for DWC optimization. The first baseline, stochastic gradient descent (SGD), does not have theoretical convergence guarantee for DWC problems. However, since it is the fundamental method for convex optimization, we include it to show its performance. We also implemented existing stochastic methods for solving DC or DWC problems with non-smooth components, including SDCA~\cite{pmlr-v54-nitanda17a}, SSDC-SPG~\cite{DBLP:conf/icml/XuQLJY19}, SSDC-Adagrad~\cite{DBLP:conf/icml/XuQLJY19} and SBCD~\cite{yao2022largescale}.

\textbf{Datasets.} We use four multi-class classification datasets, Fashion-MNIST~\cite{Xiao2017FashionMNISTAN}, MNIST~\cite{deng2012mnist} CIFAR10~\cite{krizhevsky2009learning} and FER2013~\cite{goodfellow2013challenges}. To fit them in binary classification task, we consider the first five classes as negative for Fashion-MNIST, MNIST and CIFAR10, and the first four classes as negative for FER2013. For Fashion-MNIST, MNIST, CIFAR10, we follow the standard train-test split. For FER2013, we take the first $25709$ samples as the training data, and the rest as for testing. 

\textbf{Setup.} For all datasets, we use a batch size of $64$ and set $\pi_p = 0.5$. We train $40$ epochs and decay the learning rate by $10$ at epoch $12$ and $24$. The learning rates of SGD, SDCA, SSDC-SPG and SSDC-Adagrad, the learning rate of the inner loop of SBCD (i.e., $\mu \eta_t/(\mu+\eta_t)$), and $\eta_1$ in SMAG are all tuned from $\{10, 1,0.2,0.1, 0.01, 0.001\}$. The learning rate of the outer loop in SDCA and $\eta_0$ in SMAG are tuned from $\{0.1, 0.5, 0.9\}$. The numbers of inner loops for all double-loop methods are tuned from $\{2, 5, 10\}$. The $\mu$ in SBCD, $1/\gamma$ in SSDC-SPG and SSDC-Adagrad, $\gamma$ in SMAG are tuned in $\{0.05,0.1,0.2,0.5,1,2\}$. We run $4$ trails for each setting and plot the average curves.

\textbf{Results.} We plot the curves of training losses in Figure~\ref{fig:pu}. For all tested datasets, the performance of SMAG surpasses the baselines. Among the baselines, SBDC is the generally the next best choice. However, since SBDC is a double-loop method, it has one more hyperparameter compared to SMAG. We also present the ablation study of SMAG regarding the parameter $\gamma$ in Figure~\ref{fig:pu_abla} included in the Appendix.

\begin{figure}[t]
    \centering
\hspace*{-0.1in}
    \subfigure{\includegraphics[scale=0.15]{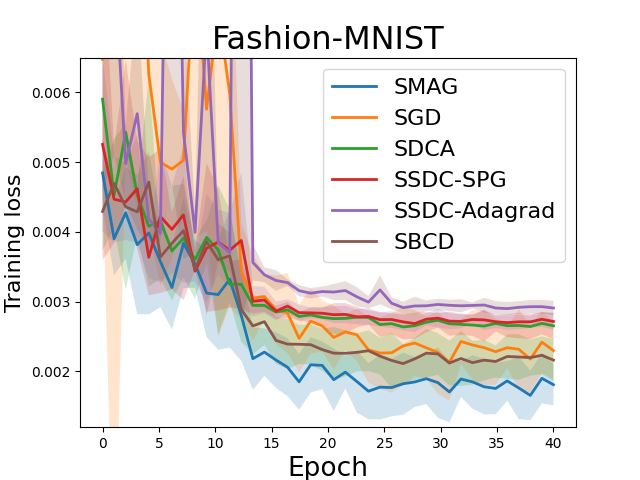}}
   \hspace*{-0.1in} \subfigure{\includegraphics[scale=0.15]{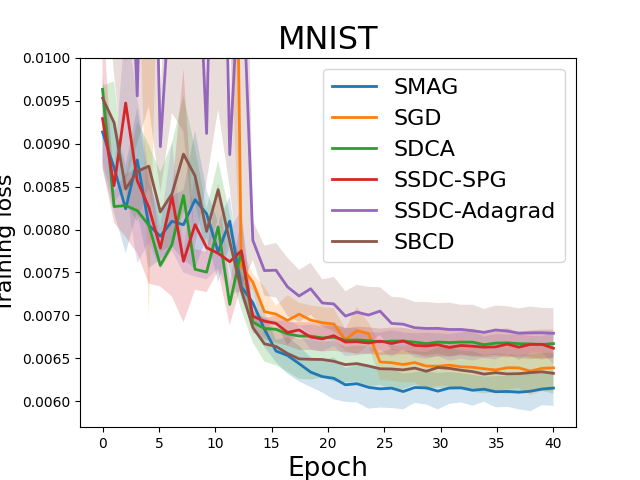}}
    \hspace*{-0.1in} \subfigure{\includegraphics[scale=0.15]{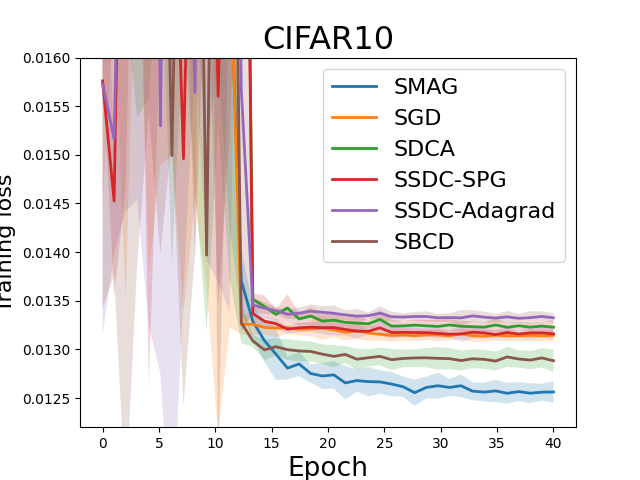}}
   \hspace*{-0.1in} \subfigure{\includegraphics[scale=0.15]{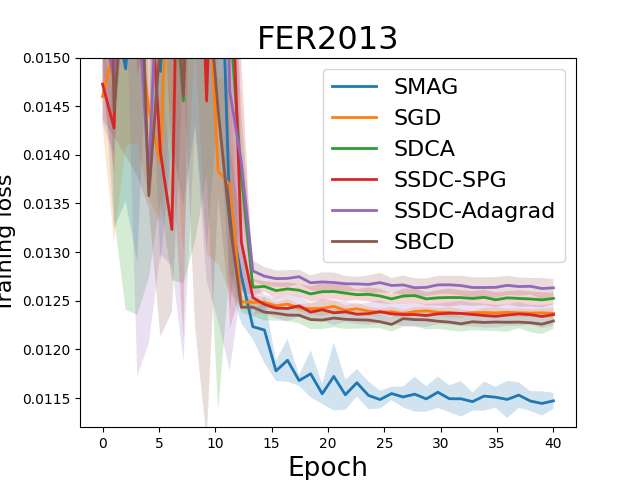}}
        \caption{Training Curves of PU Learning} \label{fig:pu}
\end{figure}

\subsection{Partial AUC Maximization with Fairness Regularization}
AUC Maximization aims to maximize the area under the curve of true positive rate (TPR) vs false positive rate (FPR). It has been studied extensively~\cite{DBLP:journals/csur/YangY23, yuan2021largescale, liu2019stochastic, NEURIPS2022_be76ca29} and has shown great success in large-scale real-world tasks, e.g., medical image classification~\cite{yuan2021largescale} and molecular properties prediction~\cite{10.1093/bioinformatics/btac112}. One-way partial AUC (OPAUC) is an extension of AUC that has a primary interest in the curve corresponding to low FPR. To be specific, OPAUC restrict the FPR to the region $[0,\rho]$ where $\rho\in (0,1)$. A recent work~\cite{pmlr-v162-zhu22g} proposed to formulate OPAUC problem into a non-smooth weakly convex optimization problem using conditional-value-at-risk (CVaR) based distributionally robust optimization (DRO). The formulation is given by
\begin{equation}\label{prob:opauc}
    \min_{\w,\s\in \R^{n_+}} F_{\text{pauc}}(\w,\s) = \frac{1}{n_+}\sum_{\x_i\in \S_+}\left(s_i+\frac{1}{\rho n_-}\sum_{\x_j\in \S_-}(L(\w;\x_i,\x_j)-s_i)_+\right),
\end{equation}
where $\S_+,\S_-$ are the sets of positive and negative samples respectively, $n_+ = |\S_+|$, $n_- = |\S_-|$, and $\w$ denotes the weights of encoder network and classification layer. The pairwise surrogate loss is defined by $L(\w;\x_i,\x_j) = \ell(h(\w,\x_i) - h(\w,\x_j))$ and we use squared hinge loss as the surrogate loss, i.e., $\ell(\cdot) = (c-\cdot)^2$, where $c>0$ is a parameter.

However, directly solving the above problem may end up with a model that is unfair with respect to some protected groups (e.g., female patients). Hence, we consider a formulation that incorporates an adversarial fairness regularization:
\begin{align*}
&\max_{\w_a}F_{\text{fair}}(\w,\w_a): = \E_{(\x, a)\sim\D_a}\left\{\I(a=1)\log(\sigma(\w,\w_a,\x))+\I(a=-1)\log (1-\sigma(\w,\w_a,\x))\right\},
\end{align*}
where $\sigma(\w, \w_a, \x)$ denotes a predicted probability that the data has a sensitive attribute $a=1$ by using a classification head $\w_a$ on top of the encoded representation of $\x$. This adversarial fairness regularization has been demonstrated effective for promoting fairness~\cite{Xie2017ControllableIT}. 
As a result,  we consider OPAUC problem with a fairness regularization:
\begin{equation}\label{prob:opauc_fair}
    \min_{\w,\s\in \R^{n_+}}\max_{\w_a} F_{\text{pauc}}(\w,\s) + \alpha F_{\text{fair}}(\w,\w_a) + \frac{\lambda_0}{2}\|\w_a\|_2^2
\end{equation}
It is clear that the problem is WCSC. 

\textbf{Baseline.} We implement our proposed method SMAG for solving OPAUC problem~(\ref{prob:opauc}) and OPAUC problem with adversarial fairness regularization~(\ref{prob:opauc_fair}). We refer the former as SMAG$^*$ and the latter as SMAG. The baseline on OPAUC problem~(\ref{prob:opauc}) is SOPA, proposed in \cite{pmlr-v162-zhu22g}. The baselines on OPAUC problem with adversarial fairness regularization~(\ref{prob:opauc_fair}) are SGDA~\cite{pmlr-v119-lin20a} and Epoch-GDA~\cite{NEURIPS2020_3f8b2a81}.

\textbf{Dataset.} CelebA contains 200k celebrity face images with 40 binary attributes each, including the gender-sensitive attribute denoted as {\it Male}. In our experiments, we conduct experiments on three independent attribute prediction tasks: {\it Attractive, Big Nose, and Bags Under Eyes}, which have high Pearson correlations~\cite{celona2018fine, park2022fair} with the sensitive attribute {\it Male}. We divide the dataset into training, validation, and test data with an 80\%/10\%/10\% split.

\textbf{Setup.} For all experiments, we adopt ResNet-18 as our backbone model architecture and initialize it with ImageNet pre-trained weights. The batch size is 128. We set the FPR upper bound to be $\rho = 0.3$. We train the model for 3 epochs with cosine decay learning rates for all baselines. The regularizer parameter $\alpha$ is tuned in ${0.1, 0.2, 0.5}$ for SGDA, Epoch-GDA, and SMAG, and the adversarial learning rates are tuned in ${0.001, 0.01, 0.1}$. $\alpha=0$ for SOPA and SMAG$^*$. The initial learning rates for optimizing $\mathbf{w}$ are tuned in ${0.1, 0.01, 0.001}$ for all methods, while the weight interpolation parameters, i.e., $\gamma$ in Epoch-GDA and SMAG, are also tuned in ${0.1, 0.01, 0.001}$. The inner loop step is tuned in $\{5, 10, 15\}$ for Epoch-GDA. $\eta_1$ in SMAG are tuned from $\{10, 1, 0.2, 0.1, 0.01, 0.001\}$.

\textbf{Results.} We report the experimental results on three fairness metrics~\cite{park2022fair}, equalized odds difference (EOD), equalized opportunity (EOP), and demographic disparity (DP) in Table~\ref{tab:CelebA-Fair}. 
We observe that SMAG consistently achieves the highest pAUC score and lowest disparities metrics across all tasks compared to all other baseline min-max methods.

\begin{table}[h!]
\centering
\caption{Mean $\pm$ std of fairness results on CelebA test dataset with {\it Attractive and Big Nose} task labels, and {\it Male} sensitive attribute. Results are reported on 3 independent runs. We use bold font to denote the best result and use underline to denote the second best. Results on \textit{Bags Under Eyes} are included in the appendix due to limited space.}
\label{tab:CelebA-Fair}
\resizebox{\columnwidth}{!}{
\begin{tabular}{c|cccc|cccc}
\toprule
& \multicolumn{4}{c|}{\textbf{Attractive, Male}} & \multicolumn{4}{c}{\textbf{Big Nose, Male}}   \\
\midrule
\textbf{Methods}            & \textbf{pAUC}$\uparrow$ & \textbf{EOD}$\downarrow$ & \textbf{EOP}$\downarrow$ & \textbf{DP}$\downarrow$          &       \textbf{pAUC}$\uparrow$ & \textbf{EOD}$\downarrow$ & \textbf{EOP}$\downarrow$ & \textbf{DP}$\downarrow$   \\
\midrule  
SOPA   & 0.8485 $\pm$ 0.012     & 0.2638 $\pm$ 0.035      & 0.2438 $\pm$ 0.032     & 0.4753 $\pm$ 0.023     & 0.8039   $\pm$ 0.005          & 0.2829  $\pm$ 0.024           & 0.2269  $\pm$ 0.019            & 0.4424 $\pm$ 0.034         \\
SMAG$^*$ & $\textbf{0.8606}$ $\pm$ 0.003      & $\underline{0.2192}$ $\pm$ 0.020    & 0.2333 $\pm$ 0.068     & 0.4510 $\pm$ 0.027&$\textbf{0.8078}$ $\pm$ 0.002             & $\underline{0.2735}$   $\pm$ 0.012          &$\underline{0.2205}$ $\pm$ 0.030            & $\underline{0.4364}$ $\pm$ 0.019 \\
\hline
SGDA  & 0.8509 $\pm$ 0.001      & 0.2701 $\pm$ 0.020     & 0.2549 $\pm$ 0.025     & 0.4860 $\pm$ 0.015   &0.8038 $\pm$ 0.002             & 0.2846 $\pm$ 0.023             & 0.2398 $\pm$ 0.029             & 0.4390 $\pm$ 0.028      \\
EGDA  & 0.8546 $\pm$ 0.004      & 0.2290 $\pm$ 0.006     & $\underline{0.1735}$ $\pm$ 0.059     & $\underline{0.4305}$ $\pm$ 0.032     &  0.8023 $\pm$ 0.005             & 0.3293 $\pm$ 0.027             & 0.3076 $\pm$ 0.012             & 0.4620 $\pm$ 0.031      \\
SMAG  & $\underline{0.8605}$ $\pm$ 0.002      & $\textbf{0.1900}$ $\pm$ 0.023     & $\textbf{0.1648}$ $\pm$ 0.064     & $\textbf{0.4116}$ $\pm$ 0.031     &$\underline{0.8058}$   $\pm$ 0.001           & $\textbf{0.2708}$  $\pm$ 0.021           & $\textbf{0.2148}$ $\pm$ 0.021             & $\textbf{0.4333}$    $\pm$ 0.013   \\      
\bottomrule
\end{tabular}}
\end{table}

\section{Conclusion}
In this study, we have introduced a new framework namely DMax optimization, that unifies DWC optimization and non-smooth WCSC min-max optimization. We proposed a single-loop stochastic method for solving DMax optimization and presented a novel convergence analysis showing that the proposed method achieves a non-asymptotic convergence rate of $\O(\epsilon^{-4})$. Experimental results on two applications, PU learning and OPAUC optimization with adversarial fairness regularization demonstrate strong performance of our method. One limitation of this work is the strong convexity assumption on the $\phi(x,\cdot)$ and $\psi(x,\cdot)$. This strong assumption may limit the applicability of our method. Future work will focus on exploring DMax optimization with weaker assumptions.

\section*{Acknowledgment}
We thank anonymous reviewers for constructive comments. Q. Hu and T. Yang were partially supported by the National Science Foundation Career Award 2246753, the National Science Foundation Award 2246757, 2246756 and 2306572. Z. Lu was partially supported by the National Science Foundation Award IIS-2211491, the Office of Naval Research Award N00014-24-1-2702, and the Air Force Office of Scientific Research Award FA9550-24-1-0343.

\newpage
\bibliography{myref, AUC}
\bibliographystyle{plain}

\newpage
\appendix

\section{Convergence Analysis}

    

Recall that $\Phi(x):=\max_{y\in\Y}\phi(x,y)$, $\Psi(x):=\max_{z\in\Z}\psi(x,z)$, $y^*(\cdot):=\argmax_{y\in \Y}\phi(\cdot,y)$, and $z^*(\cdot):=\argmax_{z\in \Z}\psi(\cdot,z)$. 
Before presenting the proof of Theorem~\ref{thm:1}, we first give the proof of the proximal point estimation error bounds. As we have stated the bound for $\| x_{\phi}^{t+1} - \prox_{\gamma \Phi}(x_t) \|^2$ in Lemma~\ref{lem:2}, here we present the corresponding lemma for $\| x_{\psi}^{t+1} - \prox_{\gamma \Psi}(x_t) \|^2$.

\begin{lemma}\label{lem:5}
 Suppose that Assumption~\ref{ass:1} holds, $0<\gamma < 1/\delta_\psi$, and $ \eta_1 \leq \frac{\gamma^2 (1/\gamma - \delta_\psi)}{2}$.  Then the sequences $\{x_t\}$, $\{z_t\}$, $\{x_\psi^t\}$ and $\{G_t\}$ generated by Algorithm~\ref{algo:1} satisfy  
 \begin{equation*}
    \begin{aligned}
        &\E\|x_\psi^{t+1} - \prox_{\gamma \Psi}(x_{t})\|^2 + \E\|z_{t+1} - z^*(\prox_{\gamma \Psi}(x_{t}))\|^2\\
        &\leq (1-\frac{\eta_1 (1/\gamma - \delta_\psi)}{2})\E\| x_\psi^{t} - \prox_{\gamma \Psi}(x_{t-1})\|^2 + (1-\eta_1\mu_\psi)\E\|z_{t} - z^*(\prox_{\gamma \Psi}(x_{t-1}))\|^2\\
        &\quad  + \left(\frac{2\eta_0^2}{\eta_1 \gamma^2(1/\gamma - \delta_\psi)^3}+\frac{L_{\psi,zx}^2\eta_0^2}{\eta_1 \mu_\psi^3\gamma^2 (1/\gamma- \delta_\psi)^2}\right)\E\|G_{t}\|^2+ 12M^2\eta_1^2\\
    \end{aligned}
\end{equation*}
\end{lemma}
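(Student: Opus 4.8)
The plan is to reproduce, for $\psi$, the argument that establishes Lemma~\ref{lem:2} for $\phi$, since the two statements are structurally identical under the substitution $(\phi,y,y^*,\Phi,\delta_\phi,\mu_\phi,L_{\phi,yx},x_\phi^t,y_t)\mapsto(\psi,z,z^*,\Psi,\delta_\psi,\mu_\psi,L_{\psi,zx},x_\psi^t,z_t)$. Abbreviate $\hat x_t:=\prox_{\gamma\Psi}(x_t)$ and $z_t^\star:=z^*(\hat x_t)$, and work throughout with the conditional expectation $\E_t$ given the iterates up to step $t$, taking total expectation only at the end. The overall estimate is produced by adding a primal contraction towards $\hat x_t$ and a dual contraction towards $z_t^\star$, and then re-anchoring both from step $t$ to step $t-1$ to match the right-hand side.

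For the primal step I would expand $\|x_\psi^{t+1}-\hat x_t\|^2=\|x_\psi^t-\hat x_t-\eta_1 g_t\|^2$ with $g_t=\tilde\partial_x\psi(x_\psi^t,z_t)+\tfrac1\gamma(x_\psi^t-x_t)$, take $\E_t$, and use unbiasedness of $\tilde\partial_x\psi$ so that $\E_t g_t=\partial_x\psi(x_\psi^t,z_t)+\tfrac1\gamma(x_\psi^t-x_t)$, with Assumption~\ref{ass:1}(v) controlling the squared-step term as an $\O(\eta_1^2M^2)$ remainder. The map $\Psi(\cdot)+\tfrac1{2\gamma}\|\cdot-x_t\|^2$ is $(1/\gamma-\delta_\psi)$-strongly convex with minimizer $\hat x_t$, and by Danskin $\partial_x\psi(x_\psi^t,z^*(x_\psi^t))\in\partial\Psi(x_\psi^t)$, so the inner product against $x_\psi^t-\hat x_t$ yields a contraction of order $(1/\gamma-\delta_\psi)\|x_\psi^t-\hat x_t\|^2$. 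The mismatch $\partial_x\psi(x_\psi^t,z_t)-\partial_x\psi(x_\psi^t,z^*(x_\psi^t))$ is bounded by $L_{\psi,zx}\|z_t-z^*(x_\psi^t)\|$ through the cross-Lipschitz regularity behind Assumption~\ref{ass:1}(iii), and I would split it off with Young's inequality so that it consumes half the contraction budget and leaves a residual proportional to $\|z_t-z_t^\star\|^2$; this is what produces the halved factor $1-\tfrac{\eta_1(1/\gamma-\delta_\psi)}{2}$.

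For the dual step I would first contract $\|z_{t+1}-z^*(x_\psi^t)\|^2$: the projected ascent $z_{t+1}=P_\Z(z_t+\eta_1\tilde\nabla_z\psi(x_\psi^t,z_t))$ on the $\mu_\psi$-strongly concave map $\psi(x_\psi^t,\cdot)$ gives, via nonexpansiveness of $P_\Z$ and Assumption~\ref{ass:1}(v), a factor $(1-\eta_1\mu_\psi)$ together with an $\O(\eta_1^2M^2)$ noise term. I would then pass from $z^*(x_\psi^t)$ to $z_t^\star=z^*(\hat x_t)$ using the $\tfrac{L_{\psi,zx}}{\mu_\psi}$-Lipschitz continuity of $z^*$ from Lemma~\ref{lem:3}, which couples back a term proportional to $\|x_\psi^t-\hat x_t\|^2$.

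Finally I would re-anchor. The per-step inequalities live at $(\hat x_t,z_t^\star)$ while the statement is anchored at $(\hat x_{t-1},z^*(\hat x_{t-1}))$; I convert them with Young's inequality and the bounds $\|\hat x_t-\hat x_{t-1}\|\le\tfrac{1}{1-\gamma\delta_\psi}\|x_t-x_{t-1}\|=\tfrac{\eta_0}{1-\gamma\delta_\psi}\|G_t\|$ (the proximal map is $\tfrac1{1-\gamma\delta_\psi}$-Lipschitz) and $\|z_t^\star-z^*(\hat x_{t-1})\|\le\tfrac{L_{\psi,zx}}{\mu_\psi}\|\hat x_t-\hat x_{t-1}\|$; since $1-\gamma\delta_\psi=\gamma(1/\gamma-\delta_\psi)$, these collapse to exactly the two terms of the $\E\|G_t\|^2$ coefficient. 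Adding the primal and dual recursions and choosing the Young weights so that the residual $\|z_t-z_t^\star\|^2$ in the primal and $\|x_\psi^t-\hat x_t\|^2$ in the dual are absorbed into the contraction factors of $\|x_\psi^t-\hat x_{t-1}\|^2+\|z_t-z^*(\hat x_{t-1})\|^2$ finishes the proof, with $\eta_1\le\tfrac{\gamma^2(1/\gamma-\delta_\psi)}{2}$ keeping both factors in $(0,1)$ and validating the halving, and the collected noise giving $12M^2\eta_1^2$. I expect the main obstacle to be precisely this simultaneous bookkeeping: selecting the Young constants so the primal--dual coupling cancels while the rates stay at $1-\tfrac{\eta_1(1/\gamma-\delta_\psi)}{2}$ and $1-\eta_1\mu_\psi$ and the $\|G_t\|^2$ coefficient reduces to the stated powers of $(1/\gamma-\delta_\psi)$, $\mu_\psi$, and $L_{\psi,zx}^2$; the one structural subtlety is the primal subgradient mismatch, whose control rests on $\partial_x\psi(\cdot,z)$ being Lipschitz in $z$, read through the composite/cross-derivative interpretation of Assumption~\ref{ass:1}(iii), and is what makes single-step dual updates sufficient.
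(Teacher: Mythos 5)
Your dual-step contraction and the final re-anchoring step (nonexpansiveness of $P_\Z$, Young's inequality, the $\tfrac{1}{1-\gamma\delta_\psi}$-Lipschitz continuity of $\prox_{\gamma\Psi}(\cdot)$ and the $\tfrac{L_{\psi,zx}}{\mu_\psi(1-\gamma\delta_\psi)}$-Lipschitz continuity of $z^*(\prox_{\gamma\Psi}(\cdot))$) match the paper. The gap is in your primal step. You control the mismatch $\partial_x\psi(x_\psi^t,z_t)-\partial_x\psi(x_\psi^t,z^*(x_\psi^t))$ by $L_{\psi,zx}\|z_t-z^*(x_\psi^t)\|$, attributing this to ``cross-Lipschitz regularity behind Assumption~\ref{ass:1}(iii)''. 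But Assumption~\ref{ass:1}(iii) only states that $\nabla_z\psi(\cdot,z)$ is $L_{\psi,zx}$-Lipschitz \emph{in $x$}; it says nothing about $\partial_x\psi(x,\cdot)$ being Lipschitz \emph{in $z$}. In the smooth case one could pass between the two via symmetry of mixed second derivatives, but $\psi$ is precisely not assumed differentiable in $x$ here --- that is the whole point of the non-smooth setting --- so $\partial_x\psi(x,\cdot)$ is in general set-valued and need not vary continuously in $z$ at all. This step therefore does not go through under the stated assumptions. Moreover, even granting such a cross-Lipschitz property, your plan to let the mismatch ``consume half the contraction budget'' via Young's inequality leaves a residual of order $\eta_1 L_{\psi,zx}^2(1/\gamma-\delta_\psi)^{-1}\|z_t-z^*(x_\psi^t)\|^2$ that must be absorbed by the dual margin $\eta_1\mu_\psi$; this forces an extra restriction of the form $L_{\psi,zx}^2\lesssim \mu_\psi(1/\gamma-\delta_\psi)$ on $\gamma$, which the lemma does not impose.

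The paper's proof avoids this issue entirely by never comparing subgradients at different dual points. Writing $\Psi_t(x,z)=\psi(x,z)+\tfrac{1}{2\gamma}\|x-x_t\|^2$, it bounds the primal inner-product term $(A)=\langle \partial_x\Psi_t(x_\psi^t,z_t),\,x_{\Psi,t}^*-x_\psi^t\rangle$ by \emph{two} strong-convexity inequalities at the current $z_t$ and at $z_t^*$: strong convexity of $\Psi_t(\cdot,z_t)$, plus the minimality of $x_{\Psi,t}^*=\prox_{\gamma\Psi}(x_t)$ for $\Psi_t(\cdot,z_t^*)$ (a saddle-point property). This yields four function-value differences, namely $\Psi_t(x_{\Psi,t}^*,z_t)-\Psi_t(x_\psi^t,z_t)+\Psi_t(x_\psi^t,z_t^*)-\Psi_t(x_{\Psi,t}^*,z_t^*)$, plus the full contraction $-(1/\gamma-\delta_\psi)\|x_{\Psi,t}^*-x_\psi^t\|^2$. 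The dual term $(B)$, bounded by $\mu_\psi$-strong concavity of $\Psi_t(x,\cdot)$ at the two points $x_\psi^t$ and $x_{\Psi,t}^*$, produces exactly the negatives of those four function values, so $(A)+(B)\leq -(1/\gamma-\delta_\psi)\|x_{\Psi,t}^*-x_\psi^t\|^2-\mu_\psi\|z_t^*-z_t\|^2$ with an exact cancellation --- no cross-regularity of $\partial_x\psi$ in $z$ and no spent contraction budget. Consequently $L_{\psi,zx}$ enters the final bound only through the Lipschitz continuity of $z^*(\prox_{\gamma\Psi}(\cdot))$ in the re-anchoring step, and the halved factor $1-\tfrac{\eta_1(1/\gamma-\delta_\psi)}{2}$ arises from the re-anchoring Young inequality (combined with $\eta_1\leq\tfrac{\gamma^2(1/\gamma-\delta_\psi)}{2}$ to kill the $2\eta_1^2/\gamma^2$ term), not from absorbing a gradient mismatch. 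You should replace your mismatch argument with this symmetric function-value cancellation.
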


Since Lemma~\ref{lem:2} and Lemma~\ref{lem:5} share the same proof strategy, we only present the proof of Lemma~\ref{lem:2}.

\subsection{Proof of Lemma~\ref{lem:2}}
\begin{proof}
Recall that $\Phi(x)=\max_{y\in\Y}\phi(x,y)$ and $y^*(\cdot)=\argmax_{y\in \Y}\phi(\cdot,y)$. 
Observe from Assumption \ref{ass:1}(i) that $\Phi$ is $\delta_\phi$-weakly convex. It then follows that $\prox_{\gamma \Phi}(\cdot)$ is $1/(1-\gamma\delta_\phi)$-Lipschitz continuous. By this, Assumption \ref{ass:1}(iii) and Lemma \ref{lem:3}, it is not hard to see that  $y^*(\prox_{\gamma \Phi}(\cdot))$ is $L_{\phi,yx}/(\mu_\phi(1-\gamma\delta_\phi))$-Lipschitz continuous.  

For notational convenience, we let
\begin{align} 
&\Phi_t(x,y) = \phi(x,y)+\frac{1}{2\gamma}\|x-x_t\|^2, \nonumber \\
& x_{\Phi,t}^* = \prox_{\gamma \Phi}(x_t),\quad y_t^* = y^*(\prox_{\gamma \Phi}(x_t)). \label{xyt}
\end{align}

In view of \eqref{xyt} and the update rule of $x_\phi^{t+1}$, one has
\begin{equation}\label{ineq:49}
    \begin{aligned}
        &\E_t\|x_\phi^{t+1} - x_{\Phi,t}^*\|^2= \E_t\|x_\phi^t - \eta_1 \tilde{\partial}_x \Phi_t(x_\phi^t,y_t) - x_{\Phi,t}^*\|^2\\
        & = \|x_\phi^t  - x_{\Phi,t}^*\|^2 - 2\E_t\langle\eta_1 \tilde{\partial}_x \Phi_t(x_\phi^t,y_t), x_\phi^t  - x_{\Phi,t}^* \rangle +\E_t\|\eta_1 \tilde{\partial}_x \Phi_t(x_\phi^t,y_t)\|^2\\
        & \leq \|x_\phi^t  - x_{\Phi,t}^*\|^2 + 2\eta_1 \underbrace{\langle \partial_x \Phi_t(x_\phi^t,y_t), x_{\Phi,t}^* - x_\phi^t\rangle}_{(A)} +8M^2\eta_1^2+\frac{2\eta_1^2}{\gamma^2} \|x_\phi^{t}  -x_{\Phi,t}^* \|^2,
    \end{aligned}
\end{equation}
where we use the inequality
\begin{equation*}
    \begin{aligned}
        &\E_t\| \tilde{\partial}_x \Phi_t(x_\phi^t,y_t)\|^2 = \E_t\|\tilde{\partial}_x \phi(x_\psi^{t},y_t) +\frac{1}{\gamma} (x_\phi^{t}-x_t) \|^2\\
        &=\E_t\|\tilde{\partial}_x \phi(x_\phi^{t},y_t) +\frac{1}{\gamma} (x_\phi^{t}-x_t) - \partial_x\phi(x_{\Phi,t}^*, y_t^*) -\frac{1}{\gamma}(x_{\Phi,t}^* - x_t)\|^2\\
        &\leq 4\E_t\|\tilde{\partial} \phi(x_\phi^{t}, y_t)\|^2 +4\|\partial_x \phi(x_{\Phi,t}^*, y_t^*)\|^2+\frac{2}{\gamma^2} \|x_\phi^{t}  -x_{\Phi,t}^* \|^2\\
        &\leq 8M^2+\frac{2}{\gamma^2} \|x_\phi^{t}  -x_{\Phi,t}^* \|^2.
    \end{aligned}
\end{equation*}

By $(\gamma^{-1} - \delta_\phi)$-strong convexity of $\Phi_t(\cdot,y)$ and the definition of $x_{\Phi,t}^*$ in \eqref{xyt}, one has
\begin{align*}
&\langle \partial_x \Phi_t(x_\phi^t,y_t) , x_{\Phi,t}^* - x_\phi^t\rangle \leq \Phi_t( x_{\Phi,t}^*,y_t) - \Phi_t(x_\phi^t,y_t) - \frac{(1/\gamma - \delta_\phi)}{2}\|x_{\Phi,t}^* - x_\phi^t\|^2,\\ 
& 0 \leq \Phi_t(x_\phi^t,y_t^*) - \Phi_t(x_{\Phi,t}^*,y_t^*) -\frac{(1/\gamma - \delta_\phi)}{2}\|x_{\Phi,t}^* - x_\phi^t\|^2.
\end{align*}
Summing up these two inequalities gives 
\begin{equation}\label{ineq:47}
    (A)  \leq \Phi_t( x_{\Phi,t}^*,y_t) - \Phi_t(x_\phi^t,y_t)+\Phi_t(x_\phi^t,y_t^*) - \Phi_t(x_{\Phi,t}^*,y_t^*) -(1/\gamma - \delta_\phi)\|x_{\Phi,t}^* - x_\phi^t\|^2.
\end{equation}


Notice from the definition of $y_t^*$ in \eqref{xyt} that there exists a particular subgradient $\nabla_y \phi(x_{\Phi,t}^*,  y_t^*)$ such that
\begin{equation*}
    y_t^* = P_\Y\big(y_t^*+ \eta_1 \nabla_y \phi(x_{\Phi,t}^*,  y_t^*)\big).
\end{equation*}

Using this and the update rule of $y_{t+1}$, we have
\begin{equation}\label{ineq:50}
    \begin{aligned}
        &\E_t\|y_{t+1} - y_t^*\|^2= \E_t\|P_\Y(y_t + \eta_1 \tilde{\nabla}_y \Phi(x_\phi^t, y_t)) - y_t^*\|^2\\
        & = \E_t\|P_\Y(y_t + \eta_1 \tilde{\nabla}_y \Phi_t(x_\phi^t, y_t)) - P_\Y(y_t^*+\eta_1 \nabla_y \Phi_t(x_{\Phi,t}^*, y_t^*))\|^2\\
        & \leq \E_t\|y_t + \eta_1 \tilde{\nabla}_y \Phi_t(x_\phi^t, y_t) - (y_t^*+\eta_1 \nabla_y \Phi_t(x_{\Phi,t}^*, y_t^*))\|^2\\
        &\leq \|y_t - y_t^*\|^2 + 2\eta_1 \langle \nabla_y \Phi_t(x_\phi^t, y_t) - \nabla_y \Phi_t(x_{\Phi,t}^*, y_t^*), y_t - y_t^*\rangle\\
        &\quad + \eta_1^2\E_t\| \tilde{\nabla}_y \Phi_t(x_\phi^t, y_t) - \nabla_y \Phi_t(x_{\Phi,t}^*,y_t^*)\|^2\\
        &\leq \|y_t - y_t^*\|^2 + 2\eta_1 \underbrace{\langle \nabla_y \Phi_t(x_\phi^t, y_t) - \nabla_y \Phi_t(x_{\Phi,t}^*, y_t^*), y_t - y_t^*\rangle}_{(B)} + 4\eta_1^2 M^2.
    \end{aligned}
\end{equation}

By $\mu_\phi$-strong concavity of $\Phi_t(x,\cdot)$, we have
\begin{equation}\label{ineq:48}
    \begin{aligned}
        (B) & = \langle -\nabla_y \Phi_t(x_\phi^t, y_t) , y_t^* - y_t \rangle + \langle - \nabla_y \Phi_t(x_{\Phi,t}^*, y_t^*),y_t -y_t^* \rangle\\
        & \leq -\Phi_t(x_\phi^t, y_t^*)+\Phi_t(x_\phi^t, y_t) -\frac{\mu_\phi}{2}\|y_t^* - y_t\|^2 \\
        &\quad -\Phi_t(x_{\Phi,t}^*, y_t)+\Phi_t(x_{\Phi,t}^*, y_t^*) -\frac{\mu_\phi}{2}\|y_t^* - y_t\|^2 \\
        &=-\Phi_t(x_\phi^t, y_t^*)+\Phi_t(x_\phi^t, y_t)-\Phi_t(x_{\Phi,t}^*, y_t)+\Phi_t(x_{\Phi,t}^*, y_t^*) -\mu_\phi\|y_t^* - y_t\|^2.
    \end{aligned}
\end{equation}

Combining (\ref{ineq:47}) and (\ref{ineq:48}) yields
\[
        (A) + (B) \leq -(1/\gamma - \delta_\phi)\|x_{\Phi,t}^* - x_\phi^t\|^2-\mu_\phi\|y_t^* - y_t\|^2.
\]

Using  this inequality,~(\ref{ineq:49}) and (\ref{ineq:50}), we have
\begin{equation*}
    \begin{aligned}
        &\E_t\|x_\phi^{t+1} - x_{\Phi,t}^*\|^2 + \E_t\|y_{t+1} - y_t^*\|^2\\
        &\leq (1- 2\eta_1 (1/\gamma - \delta_\phi)+2\eta_1^2/\gamma^2)\|x_{\Phi,t}^* - x_\phi^t\|^2 + (1-2\eta_1\mu_\phi)\|y_t^* - y_t\|^2 + 12M^2\eta_1^2 \\
        &\stackrel{(a)}{\leq} (1- \eta_1 (1/\gamma - \delta_\phi))\|x_{\Phi,t}^* - x_\phi^t\|^2 + (1-2\eta_1\mu_\phi)\|y_t^* - y_t\|^2 + 12M^2\eta_1^2 \\
        &\stackrel{(b)}{\leq} (1- \eta_1 (1/\gamma - \delta_\phi))\left(\Big(1+\frac{\eta_1 (1/\gamma - \delta_\phi)}{2}\Big)\|x_\phi^t - x_{\Phi,t-1}^*\|^2 +\Big(1+\frac{2}{\eta_1 (1/\gamma - \delta_\phi)}\Big)\| x_{\Phi,t-1}^*-x_{\Phi,t}^*\|^2\right)\\
        &\quad + (1-2\eta_1\mu_\phi)\left((1+\eta_1\mu_\phi)\|y_t - y_{t-1}^*\|^2 + \big(1+(\eta_1\mu_\phi)^{-1}\big)\|y_{t-1}^* - y_t^*\|^2\right)+ 12M^2\eta_1^2\\ 
        &\stackrel{(c)}{\leq} \Big(1-\frac{\eta_1(1/\gamma - \delta_\phi)}{2}\Big)\| x_\phi^{t} - x_{\Phi,t-1}^*\|^2 +\frac{2}{\eta_1 (1/\gamma - \delta_\phi)}\| x_{\Phi,t-1}^* - x_{\Phi,t}^*\|^2\\
        &\quad + (1-\eta_1\mu_\phi)\|y_t - y_{t-1}^*\|^2 + (\eta_1\mu_\phi)^{-1}\|y_{t-1}^* - y_t^*\|^2+ 12M^2\eta_1^2\\ 
        &\stackrel{(d)}{\leq} \left(1-\frac{\eta_1 (1/\gamma - \delta_\phi)}{2}\right)\| x_\phi^{t} - x_{\Phi,t-1}^*\|^2 + (1-\eta_1\mu_\phi)\|y_t - y_{t-1}^*\|^2\\
        &\quad  + \left(\frac{2\eta_0^2}{\eta_1 \gamma^2(1/\gamma - \delta_\phi)^3}+\frac{L_{\phi,yx}^2\eta_0^2}{\eta_1 \mu_\phi^3\gamma^2 (1/\gamma- \delta_\phi)^2}\right)\|G_t\|^2+ 12M^2\eta_1^2, 
    \end{aligned}
\end{equation*}
where $(a)$ follows from the assumption $\eta_1 \leq \frac{\gamma^2 (1/\gamma - \delta_\phi)}{2}$, $(b)$ uses the fact that $\|a+b\|^2\leq (1+\alpha)\|a\|^2+(1+\frac{1}{\alpha})\|b\|^2$ for any $\alpha>0$, (c) follows from bounding the coefficient of each term from above, and $(d)$ uses $1/(1-\gamma\delta_\phi)$-Lipschitz continuity of $\prox_{\gamma \Phi}(\cdot)$, $L_{\phi,yx}/(\mu_\phi(1-\gamma\delta_\phi))$-Lipschitz continuity of $y^*(\prox_{\gamma \Phi}(\cdot))$ and the update rule of $x_t$.

Finally taking expectation over all randomness yields the desired result.
\end{proof}

\subsection{Proof of Theorem~\ref{thm:1}}
We first present a detailed version of Theorem~\ref{thm:1}.

\begin{theorem}\label{thm:1_full}
    Consider Problem~\ref{prob:dmax} and assume Assumption~\ref{ass:1} holds. Suppose that the parameters $\gamma$, $\eta_0$ and $\eta_1$ in Algorithm~\ref{algo:1} are chosen as follows: 
    \begin{equation*}
        \begin{aligned}
            &0<\gamma < \min\{\delta_\phi^{-1}, \delta_\psi^{-1}\}, \quad \alpha = \min \left\{\frac{1/\gamma - \delta_\phi}{4}, \frac{1/\gamma - \delta_\psi}{4}, \mu_\phi, \mu_\psi\right\},\\
            &\tau = \min\left\{ \frac{ \gamma^2\alpha^2}{4} , \frac{ \mu_\phi^{1.5}\gamma^2 \alpha^{1.5}}{4L_{\phi,yx}}, \frac{ \mu_\psi^{1.5}\gamma^2 \alpha^{1.5}}{4L_{\psi,zx}} \right\},\quad \nu = \min\left\{1, \frac{2\tau}{ \gamma^2 \alpha}  \right\}, \ L_F = \frac{2}{\gamma -\gamma^2 \min\{\delta_{\psi}, \delta_{\phi}\}},  \\
            & \eta_1 = \min\left\{\frac{\gamma^2 (1/\gamma - \delta_\phi)}{2}, \frac{\gamma^2 (1/\gamma - \delta_\psi)}{2}, \frac{1}{2L_F\tau}, \frac{\min\{1,\gamma^2\} \min\left\{\alpha, \tau\right\} \nu \alpha}{768  \tau M^2 }  \epsilon^2\right\},\quad  \eta_0 = \tau\eta_1.
        \end{aligned}
    \end{equation*}
Then we have 
\begin{equation*}
\frac{1}{T}\sum_{t=0}^{T-1}(\E\|x_\phi^{t+1} - \prox_{\gamma \Phi}(x_t)\|^2+\E\|x_\psi^{t+1} - \prox_{\gamma \Psi}(x_t)\|^2+\E\|\nabla F_\gamma (x_t)\|^2)\leq \min\{1,\gamma^{-2}\}\frac{\epsilon^2}{4},
\end{equation*}
and consequently  $x_\phi^{\bar{t}}$ and $x_\psi^{\bar{t}}$ are both nearly $\epsilon$-critical points of problem \eqref{prob:dmax}, whenever
\begin{equation} \label{T-ineq}
\begin{aligned}
    T \geq \frac{16(F_\gamma(x_0) - F_\gamma^* +P_0)}{\min\{1,\gamma^{-2}\}\min\{\alpha, \tau\} \nu \epsilon^2}\max\left\{\frac{2}{\gamma^2 (1/\gamma - \delta_\phi)}, \frac{2}{\gamma^2 (1/\gamma - \delta_\psi)}, 2L_F\tau, \frac{768  \tau M^2 }{\min\{1, \gamma^2\} \min\left\{\alpha, \tau\right\} \nu \alpha\epsilon^2}  \right\}
\end{aligned}
\end{equation}
with
\begin{equation*}
    P_0 = \frac{2\eta_0}{\eta_1 \gamma^2 \alpha}\left(\E\|x_\phi^{1} - \prox_{\gamma \Phi}(x_{0})\|^2 + \E\|y_{1} - y_{0}^*\|^2+\E\|x_\psi^{1} - \prox_{\gamma \Psi}(x_{0})\|^2+ \E\|z_{1} - z_{0}^*\|^2\right).
\end{equation*}
\end{theorem}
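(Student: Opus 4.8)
The plan is to build a single Lyapunov (potential) function that couples the Moreau-envelope descent of Lemma~\ref{lem:4} with the proximal-point tracking recursions of Lemma~\ref{lem:2} and Lemma~\ref{lem:5}, and then telescope. First I would abbreviate the four tracking errors by $A_t^\phi=\E\|x_\phi^{t}-\prox_{\gamma\Phi}(x_{t-1})\|^2$, $B_t^\phi=\E\|y_t-y^*(\prox_{\gamma\Phi}(x_{t-1}))\|^2$, and analogously $A_t^\psi,B_t^\psi$ for $\psi$, setting $S_t=A_t^\phi+B_t^\phi+A_t^\psi+B_t^\psi$. Adding Lemma~\ref{lem:2} and Lemma~\ref{lem:5} and using $\alpha=\min\{(1/\gamma-\delta_\phi)/4,(1/\gamma-\delta_\psi)/4,\mu_\phi,\mu_\psi\}$ to bound every contraction factor by $1-\eta_1\alpha$ (since $\tfrac{1/\gamma-\delta}{2}\ge 2\alpha\ge\alpha$ and $\mu\ge\alpha$), I obtain the clean recursion $S_{t+1}\le(1-\eta_1\alpha)S_t+(C_\phi+C_\psi)\E\|G_t\|^2+24M^2\eta_1^2$, where $C_\phi,C_\psi$ are the $\|G_t\|^2$-coefficients of those lemmas.

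The crux is a pair of cancellations. On the descent side, Lemma~\ref{lem:4} together with the gradient-error bound~\eqref{ineq:grad_error} gives $\E F_\gamma(x_{t+1})\le\E F_\gamma(x_t)+\frac{\eta_0}{\gamma^2}(A_{t+1}^\phi+A_{t+1}^\psi)-\frac{\eta_0}{2}\E\|\nabla F_\gamma(x_t)\|^2-\frac{\eta_0}{4}\E\|G_{t+1}\|^2$, where the descent lemma is applicable because $\eta_0=\tau\eta_1\le 1/(2L_F)$ by the third entry in the $\eta_1$ schedule. I would then define $V_t=\E F_\gamma(x_t)+\kappa S_{t+1}$ with $\kappa=\frac{2\eta_0}{\gamma^2\eta_1\alpha}$ --- exactly the constant appearing in $P_0$, so that $\kappa S_1=P_0$ --- and combine the two inequalities at matching indices. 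Two requirements must hold: (i) the positive $\frac{\eta_0}{\gamma^2}(A_t^\phi+A_t^\psi)$ produced by the envelope gradient error must be absorbed by $-\kappa\eta_1\alpha S_t$; since $\kappa\eta_1\alpha=2\eta_0/\gamma^2$ and $S_t\ge A_t^\phi+A_t^\psi$, the leftover is the genuinely negative tracking term $-\frac{\eta_0}{\gamma^2}(A_t^\phi+A_t^\psi)$; (ii) the cross term $\kappa(C_\phi+C_\psi)\E\|G_t\|^2$ must be dominated by $-\frac{\eta_0}{4}\E\|G_t\|^2$, which reduces to $C_\phi+C_\psi\le\gamma^2\eta_1\alpha/8$; after bounding $1/\gamma-\delta\ge4\alpha$ and $\mu\ge\alpha$, this is precisely what the definition of $\tau$ (via $\tau\le\gamma^2\alpha^2/4$ and $\tau\le\mu_\phi^{1.5}\gamma^2\alpha^{1.5}/(4L_{\phi,yx})$, and likewise for $\psi$) guarantees.

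Granting both cancellations, the per-step estimate collapses to $V_t-V_{t-1}\le-\frac{\eta_0}{\gamma^2}(A_t^\phi+A_t^\psi)-\frac{\eta_0}{2}\E\|\nabla F_\gamma(x_{t-1})\|^2+24\kappa M^2\eta_1^2$. Telescoping over $t$, using $V_T\ge F_\gamma^*$ and $\kappa S_1=P_0$, yields $\frac{\eta_0}{\gamma^2}\sum_{t=0}^{T-1}(A_{t+1}^\phi+A_{t+1}^\psi)+\frac{\eta_0}{2}\sum_{t=0}^{T-1}\E\|\nabla F_\gamma(x_t)\|^2\le F_\gamma(x_0)-F_\gamma^*+P_0+24\kappa M^2\eta_1^2T$. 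Writing $c_{\min}=\min\{\eta_0/\gamma^2,\eta_0/2\}$, dividing by $c_{\min}T$, and noting $24\kappa M^2\eta_1^2/\eta_0=48M^2\eta_1/(\gamma^2\alpha)$, the fourth ($\O(\epsilon^2)$) entry of the $\eta_1$ schedule forces the noise term below $\min\{1,\gamma^{-2}\}\epsilon^2/8$ while the lower bound~\eqref{T-ineq} on $T$ forces the $O(1/T)$ initial-condition term below the same level; here $\nu$ and $\min\{\alpha,\tau\}$ serve to normalize the two unequal coefficients against the target weight $\min\{1,\gamma^{-2}\}$. This produces the displayed averaged bound; selecting $\bar t$ uniformly and invoking Lemma~\ref{lem:1} (each summand being at most $\min\{1,\gamma^{-2}\}\epsilon^2/4\le\epsilon^2/4$ meets both its hypotheses) shows $x_\phi^{\bar t},x_\psi^{\bar t}$ are nearly $\epsilon$-critical points.

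The main obstacle is the bookkeeping in the two cancellations: the single constant $\kappa$ must simultaneously clear the envelope-gradient tracking error (forcing $\kappa\ge2\eta_0/(\gamma^2\eta_1\alpha)$) and keep the $\|G_t\|^2$ cross term subordinate (forcing $\kappa\le\eta_0/(4(C_\phi+C_\psi))$), and one must verify these are compatible exactly under the prescribed $\alpha,\tau$. The remaining difficulty is purely constant-tracking --- reconciling the mismatched coefficients $\eta_0/\gamma^2$ and $\eta_0/2$ on the tracking and gradient sums with the single required weight $\min\{1,\gamma^{-2}\}$ --- which is what the auxiliary quantities $\nu$ and $\min\{\alpha,\tau\}$ in the $\eta_1$ and $T$ formulas are designed to absorb.
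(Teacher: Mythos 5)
Your proposal is correct and matches the paper's own proof essentially step for step: it uses the same potential function (your $\kappa S_{t+1}$ with $\kappa=\frac{2\eta_0}{\eta_1\gamma^2\alpha}$ is exactly the paper's $P_t$), the same unified contraction via $\alpha$ applied to Lemmas~\ref{lem:2} and~\ref{lem:5}, the same two cancellations enforced by the choice of $\tau$, the same telescoping against $F_\gamma^*$ and $P_0$, the same $\epsilon^2/8+\epsilon^2/8$ split between the noise and initialization terms, and the same conclusion via uniform sampling and Lemma~\ref{lem:1}. The only difference is cosmetic bookkeeping at the end---you normalize the telescoped sum by $\min\{\eta_0/\gamma^2,\eta_0/2\}$ directly, whereas the paper normalizes by $\beta=\min\{\eta_1\alpha/2,\eta_0/2\}$ and then converts $P_t$ to the tracking errors via $\nu$---and since $\min\{\eta_0/\gamma^2,\eta_0/2\}\geq\nu\beta$ under the stated parameter choices, your version of the final bound holds with the same $\eta_1$ and $T$.
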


\begin{proof}
For notational convenience, let
\begin{equation} \label{xzt}
        x_{\Psi,t}^* = \prox_{\gamma\Psi}(x_t),\quad z_t^* = \argmax_{z\in \Z}\psi(x_{\Psi,t}^*,z).
\end{equation}
From Proposition \ref{prop:1}, we know that $F_\gamma(\cdot)$ is $L_F$-smooth. By this, $0<\eta_0\leq \frac{1}{2L_F}$, and Lemma \ref{lem:4}, one has
\begin{equation}\label{ineq:33}
    F_\gamma(x_{t+1})\leq F_\gamma(x_t) +\frac{\eta_0}{2}\|\nabla F_\gamma(x_t) - G_{t+1}\|^2 -\frac{\eta_0}{2}\|\nabla F_\gamma (x_t)\|^2  - \frac{\eta_0}{4}\|G_{t+1}\|^2.
\end{equation}
Notice that
\begin{equation*}
\begin{aligned}
       &\nabla F_\gamma (x_t) = \gamma^{-1} (\prox_{\gamma \Psi}(x_t) - x_t +x_t - \prox_{\gamma \Phi}(x_t)) =  \gamma^{-1} (\prox_{\gamma \Psi}(x_t)  - \prox_{\gamma \Phi}(x_t)), \\
    & G_{t+1} = \gamma^{-1}(x_{\psi}^{t+1} - x_\phi^{t+1}). 
\end{aligned}
\end{equation*}
Using these, \eqref{xyt} and \eqref{xzt},  we have
\begin{equation}\label{ineq:34}
    \begin{aligned}
        \|\nabla F_\gamma(x_t) - G_{t+1}\|^2& = \|\gamma^{-1} (\prox_{\gamma \Psi}(x_t)  - \prox_{\gamma \Phi}(x_t)) -  \gamma^{-1}(x_{\psi}^{t+1} - x_\phi^{t+1})\|^2\\
 & = \|\gamma^{-1} (x_{\Psi,t}^* - x_{\Phi,t}^*)-\gamma^{-1}(x_{\psi}^{t+1} - x_\phi^{t+1})\|^2\\
        & \leq  2\gamma^{-2} \left(\|x_{\Psi,t}^* -  x_{\psi}^{t+1} \|^2+\| x_{\Phi,t}^* - x_\phi^{t+1}\|^2\right).
    \end{aligned}
\end{equation}
It follows from this and \eqref{ineq:33} that
\begin{equation}\label{ineq:55}
\begin{aligned}
\E [F_\gamma(x_{t+1})] &\leq \E [F_\gamma(x_t)] +\frac{\eta_0}{\gamma^2} \E\|x_{\psi}^{t+1} -x_{\Psi,t}^* \|^2+\frac{\eta_0}{\gamma^2}\E\|x_\phi^{t+1}- x_{\Phi,t}^*\|^2\\
&\quad  -\frac{\eta_0}{2}\E\|\nabla F_\gamma (x_t)\|^2- \frac{\eta_0}{4}\E\|G_{t+1}\|^2.
\end{aligned}
\end{equation}
Let $x_{\Phi,t}^*$ and $y_t^*$ be defined in \eqref{xyt}. Invoking Lemma~\ref{lem:2}, we have
\begin{equation*}
    \begin{aligned}
        &\E\|x_\phi^{t+2} - x_{\Phi,t+1}^*\|^2 + \E\|y_{t+2} - y_{t+1}^*\|^2\\
        &\leq \left(1-\frac{\eta_1 (1/\gamma - \delta_\phi)}{2}\right)\E\| x_\phi^{t+1} - x_{\Phi,t}^*\|^2 + (1-\eta_1\mu_\phi)\E\|y_{t+1} - y_{t}^*\|^2\\
        &\quad  + \left(\frac{2\eta_0^2}{\eta_1 \gamma^2(1/\gamma - \delta_\phi)^3}+\frac{L_{\phi,yx}^2\eta_0^2}{\eta_1 \mu_\phi^3\gamma^2 (1/\gamma- \delta_\phi)^2}\right)\E\|G_{t+1}\|^2+ 12M^2\eta_1^2.
    \end{aligned}
\end{equation*}
Recall that $x_{\Psi,t}^*$ and $z_t^*$ are defined in \eqref{xzt}. By Lemma~\ref{lem:5}, one has
\begin{equation*}
    \begin{aligned}
        &\E\|x_\psi^{t+2} - x_{\Psi,t+1}^*\|^2 + \E\|z_{t+2} - z_{t+1}^*\|^2\\
        &\leq \left(1-\frac{\eta_1 (1/\gamma - \delta_\psi)}{2}\right)\E\| x_\psi^{t+1} - x_{\Psi,t}^*\|^2 + (1-\eta_1\mu_\psi)\E\|z_{t+1} - z_{t}^*\|^2\\
        &\quad  + \left(\frac{2\eta_0^2}{\eta_1 \gamma^2(1/\gamma - \delta_\psi)^3}+\frac{L_{\psi,zx}^2\eta_0^2}{\eta_1 \mu_\psi^3\gamma^2 (1/\gamma- \delta_\psi)^2}\right)\E\|G_{t+1}\|^2+ 12M^2\eta_1^2.
    \end{aligned}
\end{equation*}


Let $\alpha$ be given in the statement of this theorem. Using this and the last two inequalities above, we have
\begin{equation}\label{ineq:56}
    \begin{aligned}
        &\E\|x_\phi^{t+2} - x_{\Phi,t+1}^*\|^2 + \E\|y_{t+2} - y_{t+1}^*\|^2\\
        &\leq (1-\alpha\eta_1 )\big(\E\| x_\phi^{t+1} - x_{\Phi,t}^*\|^2 + \E\|y_{t+1} - y_{t}^*\|^2\big)\\
        &\quad  + \left(\frac{2\eta_0^2}{\eta_1 \gamma^2(1/\gamma - \delta_\phi)^3}+\frac{L_{\phi,yx}^2\eta_0^2}{\eta_1 \mu_\phi^3\gamma^2 (1/\gamma- \delta_\phi)^2}\right)\E\|G_{t+1}\|^2+ 12M^2\eta_1^2,
    \end{aligned}
\end{equation}

\begin{equation}\label{ineq:57}
    \begin{aligned}
        &\E\|x_\psi^{t+2} - x_{\Psi,t+1}^*\|^2 + \E\|z_{t+2} - z_{t+1}^*\|^2\\
        &\leq (1-\alpha\eta_1 )\big(\E\| x_\psi^{t+1} - x_{\Psi,t}^*\|^2 + \E\|z_{t+1} - z_{t}^*\|^2\big)\\
        &\quad  + \left(\frac{2\eta_0^2}{\eta_1 \gamma^2(1/\gamma - \delta_\psi)^3}+\frac{L_{\psi,zx}^2\eta_0^2}{\eta_1 \mu_\psi^3\gamma^2 (1/\gamma- \delta_\psi)^2}\right)\E\|G_{t+1}\|^2+ 12M^2\eta_1^2.
    \end{aligned}
\end{equation}

Summing up inequalities (\ref{ineq:55}), (\ref{ineq:56})$\times \frac{2\eta_0}{\eta_1 \gamma^2 \alpha}$ and (\ref{ineq:57})$\times \frac{2\eta_0}{\eta_1 \gamma^2 \alpha}$ yields

\begin{equation}\label{ineq:58}
\begin{aligned}
    &\E [F_\gamma(x_{t+1})]+\frac{2\eta_0}{\eta_1 \gamma^2 \alpha}\left(\E\|x_\phi^{t+2} - x_{\Phi,t+1}^*\|^2 + \E\|y_{t+2} - y_{t+1}^*\|^2\right)\\
    &\quad +\frac{2\eta_0}{\eta_1 \gamma^2 \alpha}\left(\E\|x_\psi^{t+2} - x_{\Psi,t+1}^*\|^2+ \E\|z_{t+2} - z_{t+1}^*\|^2\right)\\
    &\leq \E [F_\gamma(x_t)]+\frac{2\eta_0}{\eta_1 \gamma^2 \alpha}\left(1-\frac{\eta_1\alpha}{2}\right)\left(\E\|x_\phi^{t+1} - x_{\Phi,t}^*\|^2  + \E\|y_{t+1} - y_{t}^*\|^2\right)\\
    &\quad +\frac{2\eta_0}{\eta_1 \gamma^2 \alpha}\left(1-\frac{\eta_1\alpha}{2}\right)\left(\E\|x_\psi^{t+1} - x_{\Psi,t}^*\|^2+\E  \|y_{t+1} - y_{t}^*\|^2\right)\\
    &\quad +\Bigg(  \frac{4\eta_0^3}{\eta_1^2 \gamma^4\alpha(1/\gamma - \delta_\phi)^3} + \frac{4\eta_0^3}{\eta_1^2 \gamma^4\alpha(1/\gamma - \delta_\psi)^3}+\frac{2 L_{\phi,yx}^2\eta_0^3}{\eta_1^2 \mu_\phi^3\gamma^4\alpha (1/\gamma- \delta_\phi)^2}\\
    &\quad +\frac{2 L_{\psi,zx}^2\eta_0^3}{\eta_1^2 \mu_\psi^3\gamma^4\alpha (1/\gamma- \delta_\psi)^2}- \frac{\eta_0}{4}\Bigg)\E\|G_{t+1}\|^2\\
    &\quad -\frac{\eta_0}{2}\E\|\nabla F_\gamma (x_t)\|^2 + \frac{24\eta_0\eta_1M^2}{ \gamma^2 \alpha} + \frac{24\eta_0\eta_1M^2}{ \gamma^2 \alpha}.
\end{aligned}
\end{equation}

We now introduce a potential function
\begin{equation} \label{Pt}
    P_t = \frac{2\eta_0}{\eta_1 \gamma^2 \alpha}\bigg(\E\|x_\phi^{t+1} - x_{\Phi,t}^*\|^2 + \E\|y_{t+1} - y_{t}^*\|^2+\E\|x_\psi^{t+1} - x_{\Psi,t}^*\|^2 + \E\|z_{t+1} - z_{t}^*\|^2\bigg),
\end{equation}
and rewrite inequality~\eqref{ineq:58} as
\begin{equation*}
    \begin{aligned}
        &\E [F_\gamma(x_{t+1})] + P_{t+1}\\
        &\leq \E [F_\gamma(x_{t})] + (1-\beta) P_{t}-\beta\E\|\nabla F_\gamma (x_t)\|^2  + \frac{48\eta_0\eta_1 M^2}{ \gamma^2 \alpha} \\
        &\quad +\left(  \frac{\eta_0^3}{\eta_1^2 \gamma^4\alpha^4} +\frac{ L_{\phi,yx}^2\eta_0^3}{\eta_1^2 \mu_\phi^3\gamma^4\alpha^3}+\frac{ L_{\psi,zx}^2\eta_0^3}{\eta_1^2 \mu_\psi^3\gamma^4\alpha^3}- \frac{\eta_0}{4}\right)\E\|G_{t+1}\|^2,
    \end{aligned}
\end{equation*}
where 
\begin{equation} \label{beta}
    \beta = \min\left\{\frac{\eta_1\alpha}{2},\frac{\eta_0}{2}\right\}.
\end{equation}
This inequality, together with the choice of $\eta_0$ and $\tau$ specified in this theorm, yields
\[
        E [F_\gamma(x_{t+1})] + P_{t+1}\leq \E [F_\gamma(x_{t})] + (1-\beta) P_{t}-\beta\E\|\nabla F_\gamma (x_t)\|^2  + \frac{48\eta_0\eta_1 M^2}{ \gamma^2 \alpha}. 
\]
Taking average of these inequalities over $t = 0,\dots, T-1$ yields
\begin{equation} \label{average}
\frac{1}{T}\sum_{t=0}^{T-1}  (P_t + \E\|\nabla F_\gamma (x_t)\|^2) \leq \frac{F_\gamma(x_0) - F_\gamma^* +P_0}{\beta T} + \frac{48\eta_0\eta_1M^2}{ \beta \gamma^2 \alpha}, 
\end{equation}
where we use $F_\gamma^* \leq F_\gamma(x_T)$ due to Assumption \ref{ass:1}(iii).
Recall that $\eta_0=\tau\eta_1$ and $\nu = \min\{1, \frac{2\tau}{ \gamma^2 \alpha}  \}$. Using these, \eqref{Pt} and \eqref{average}, we have
\begin{equation*}
    \begin{aligned}
        &\frac{1}{T}\sum_{t=0}^{T-1}(\E\|x_\phi^{t+1} -x_{\Phi,t}^*\|^2+\E\|x_\psi^{t+1} - x_{\Psi,t}^*\|^2+\E\|\nabla F_\gamma (x_t)\|^2)\\
        &\leq \frac{1}{\nu T}\sum_{t=0}^{T-1}(P_t+\E\|\nabla F_\gamma (x_t)\|^2) \leq \frac{F_\gamma(x_0) - F_\gamma^* +P_0}{\nu\beta T} + \frac{48\eta_0\eta_1M^2}{ \nu\beta \gamma^2 \alpha}.
    \end{aligned}
\end{equation*}


By \eqref{beta} and the choice of $\alpha$, $\eta_0$ and $\eta_1$ specified in this theorem, one has 
\[
   \frac{ \min\{1,\gamma^{-2}\}\nu\beta \gamma^2 \alpha \epsilon^2}{384\eta_0M^2} = \frac{\min\{1,\gamma^{2}\} \min\left\{\frac{\eta_1\alpha}{2},\frac{\eta_1 \tau}{2}\right\} \nu\alpha \epsilon^2}{384 \eta_1 \tau M^2 } = \frac{ \min\{1,\gamma^{2}\}\min\left\{\alpha, \tau\right\} \nu \alpha}{768  \tau M^2 }  \epsilon^2  \geq \eta_1,
\]
which implies that 
\[
\frac{48\eta_0\eta_1 M^2}{ \nu\beta\gamma^2 \alpha} \leq \min\{1,\gamma^{-2}\} \frac{\epsilon^2}{8}.
\]
Suppose that $T$ satisfies \eqref{T-ineq}. It then follows from \eqref{beta}, $\eta_0=\tau\eta_1$,  and the expression of $\eta_1$ that 
\begin{equation*}
\begin{aligned}
   T &\geq \frac{16(F_\gamma(x_0) - F_\gamma^* +P_0)}{\min\{1,\gamma^{-2}\}\min\{\alpha, \tau\} \nu \epsilon^2}\max\left\{\frac{2}{\gamma^2 (1/\gamma - \delta_\phi)}, \frac{2}{\gamma^2 (1/\gamma - \delta_\psi)}, 2L_F\tau, \frac{768  \tau M^2 }{ \min\{1,\gamma^2\}\min\left\{\alpha, \tau\right\} \nu \alpha\epsilon^2}  \right\} \\
   &= \frac{8(F_\gamma(x_0) - F_\gamma^* +P_0)}{\min\{1,\gamma^{-2}\}\nu\beta \epsilon^2},
   \end{aligned}
\end{equation*}
which implies that 
\[
\frac{F_\gamma(x_0) - F_\gamma^* +P_0}{\nu\beta T} \leq \min\{1,\gamma^{-2}\}\frac{\epsilon^2}{8}.
\]
Hence, for any $T$ satisfying \eqref{T-ineq}, one has
\[
    \frac{1}{T}\sum_{t=0}^{T-1}(\E\|x_\phi^{t+1} - x_{\Phi,t}^*\|^2+\E\|x_\psi^{t+1} - x_{\Psi,t}^*\|^2+\E\|\nabla F_\gamma (x_t)\|^2)\leq \min\{1,\gamma^{-2}\}\frac{\epsilon^2}{4},
\]
which together with $x_{\Phi,t}^* = \prox_{\gamma \Phi}(x_t)$ and $x_{\Psi,t}^* = \prox_{\gamma \Psi}(x_t)$ yields
\[
    \frac{1}{T}\sum_{t=0}^{T-1}(\E\|x_\phi^{t+1} - \prox_{\gamma \Phi}(x_t)\|^2+\E\|x_\psi^{t+1} - \prox_{\gamma \Psi}(x_t)\|^2+\E\|\nabla F_\gamma (x_t)\|^2)\leq \min\{1,\gamma^{-2}\} \frac{\epsilon^2}{4}.
\]
Since $\bar{t}$ is uniformly sampled from $\{1, \dots, T\}$, we have 
\[
\E[\|x_\phi^{\bar{t}} - \prox_{\gamma \Phi}(x_{\bar{t}-1})\|^2+\|x_\psi^{\bar{t}} - \prox_{\gamma \Psi}(x_{\bar{t}-1})\|^2+\|\nabla F_\gamma (x_{\bar{t}-1})\|^2]\leq \min\{1,\gamma^{-2}\} \frac{\epsilon^2}{4}.
\]
It then follows from Lemma \ref{lem:1} that $x_\phi^{\bar{t}}$ and $x_\psi^{\bar{t}}$ are both nearly $\epsilon$-critical points of problem \eqref{prob:dmax}.

\end{proof}

\subsection{Proof of Corollary~\ref{cor:1}}
We present a detailed version of Corollary~\ref{cor:1}

\begin{corollary}\label{cor:1_full}
    Consider Problem~\ref{prob:dwc} and assume Assumption~\ref{ass:2} holds. Suppose that the parameters $\gamma$, $\eta_0$ and $\eta_1$ in Algorithm~\ref{algo:2} are chosen as follows: 
    \begin{equation*}
        \begin{aligned}
            &0<\gamma < \min\{\delta_\phi^{-1},\delta_\psi^{-1}\},\quad \alpha = \min \left\{\frac{1/\gamma - \delta_\phi}{2}, \frac{1/\gamma - \delta_\psi}{2}\right\},\quad \tau =  \frac{ \gamma^2\alpha^2}{4} ,\quad \nu = \min\left\{1, \frac{2\tau}{ \gamma^2 \alpha}\right\},\\
            & \eta_1 = \min\left\{\frac{\gamma^2 (1/\gamma - \delta_\phi)}{2}, \frac{\gamma^2 (1/\gamma - \delta_\psi)}{2}, \frac{1}{2L_F\tau}, \frac{\min\{1,\gamma^2\} \min\left\{\alpha, \tau\right\} \nu \alpha}{768  \tau M^2 }  \epsilon^2\right\},\quad  \eta_0 = \tau\eta_1.
        \end{aligned}
    \end{equation*}
Then we have 
\begin{equation*}
    \frac{1}{T}\sum_{t=0}^{T-1}(\E\|x_\phi^{t+1} - \prox_{\gamma \phi}(x_t)\|^2+\E\|x_\psi^{t+1} - \prox_{\gamma \psi}(x_t)\|^2+\E\|\nabla F_\gamma (x_t)\|^2)\leq \min\{1,\gamma^{-2}\} \frac{\epsilon^2}{4},
\end{equation*}
and consequently  $x_\phi^{\bar{t}}$ and $x_\psi^{\bar{t}}$ are both nearly $\epsilon$-critical points of problem \eqref{prob:dwc}, whenever
\begin{equation*}
\begin{aligned}
    T \geq \frac{16(F_\gamma(x_0) - F_\gamma^* +P_0)}{\min\{1,\gamma^{-2}\} \min\{\alpha, \tau\} \nu \epsilon^2}\max\left\{\frac{2}{\gamma^2 (1/\gamma - \delta_\phi)}, \frac{2}{\gamma^2 (1/\gamma - \delta_\psi)}, 2L_F\tau, \frac{768  \tau M^2 }{\min\{1,\gamma^2\} \min\left\{\alpha, \tau\right\} \nu \alpha\epsilon^2}  \right\}.
\end{aligned}
\end{equation*}
with
\begin{equation*}
    P_0 = \frac{2\tau}{ \gamma^2 \alpha}\left(\E\|x_\phi^{1} - \prox_{\gamma \phi}(x_{0})\|^2 +\E\|x_\psi^{1} - \prox_{\gamma \psi}(x_{0})\|^2\right).
\end{equation*}
\end{corollary}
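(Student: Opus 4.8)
The plan is to obtain Corollary~\ref{cor:1_full} as a direct specialization of Theorem~\ref{thm:1_full}, using that DWC problem~\eqref{prob:dwc} is the instance of DMax problem~\eqref{prob:dmax} in which the components are independent of the dual variables. Concretely, I would set $\phi(x,y)=\phi(x)$ and $\psi(x,z)=\psi(x)$, so that $\Phi(x)=\max_{y\in\Y}\phi(x,y)=\phi(x)$ and $\Psi(x)=\max_{z\in\Z}\psi(x,z)=\psi(x)$, whence $\prox_{\gamma\Phi}=\prox_{\gamma\phi}$ and $\prox_{\gamma\Psi}=\prox_{\gamma\psi}$. Under this reduction I would first verify that Assumption~\ref{ass:1} collapses to Assumption~\ref{ass:2}: part~(i) is unchanged; parts~(ii)--(iii) are vacuous, since a function constant in $y$ is trivially $\mu_\phi$-strongly concave for any $\mu_\phi$ and has $\nabla_y\phi\equiv 0$, hence $L_{\phi,yx}=0$; and parts~(iv)--(v) become Assumption~\ref{ass:2}(ii)--(iii). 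Correspondingly, Algorithm~\ref{algo:1} degenerates to Algorithm~\ref{algo:2}, as the $y_t,z_t$ updates are no longer needed and the $x_\phi,x_\psi$ steps lose their dual arguments.

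Next I would track how the constants of Theorem~\ref{thm:1_full} simplify when $L_{\phi,yx}=L_{\psi,zx}=0$. In the definition of $\tau$, the ratios $\frac{\mu_\phi^{1.5}\gamma^2\alpha^{1.5}}{4L_{\phi,yx}}$ and $\frac{\mu_\psi^{1.5}\gamma^2\alpha^{1.5}}{4L_{\psi,zx}}$ are formally $+\infty$ and drop out of the minimum, leaving exactly $\tau=\gamma^2\alpha^2/4$. Moreover, in the recursion of Lemma~\ref{lem:2} the dual error $\E\|y_{t+1}-y^*(\cdot)\|^2$, the factor $(1-\eta_1\mu_\phi)$, and the $L_{\phi,yx}^2$ coefficient all vanish, so the proximal-point error obeys a clean single-variable recursion contracting at rate $1-\eta_1(1/\gamma-\delta_\phi)/2$; this is why the corollary may drop the $\mu_\phi,\mu_\psi$ terms and take the tighter $\alpha=\min\{(1/\gamma-\delta_\phi)/2,(1/\gamma-\delta_\psi)/2\}$. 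The potential $P_t$ and its initial value $P_0$ likewise shed their $\E\|y\|^2$ and $\E\|z\|^2$ terms, matching the stated $P_0$. With these identifications, the rest is the verbatim specialization of the proof of Theorem~\ref{thm:1_full}: I would combine the descent inequality (Lemma~\ref{lem:4} applied to the $L_F$-smooth $F_\gamma$), the gradient-error bound~\eqref{ineq:grad_error}, and the two reduced recursions through the same potential-function telescoping, average over $t=0,\dots,T-1$, and substitute $\eta_0=\tau\eta_1$ with $\eta_1=\O(\epsilon^2)$ to reach $\frac{1}{T}\sum_{t}(\E\|x_\phi^{t+1}-\prox_{\gamma\phi}(x_t)\|^2+\E\|x_\psi^{t+1}-\prox_{\gamma\psi}(x_t)\|^2+\E\|\nabla F_\gamma(x_t)\|^2)\le\min\{1,\gamma^{-2}\}\epsilon^2/4$ once $T$ meets the stated bound; uniform sampling of $\bar t$ and Lemma~\ref{lem:1} then identify $x_\phi^{\bar t},x_\psi^{\bar t}$ as nearly $\epsilon$-critical points of~\eqref{prob:dwc}.

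The main obstacle I anticipate lies not in the descent and telescoping machinery, which transfers mechanically, but in confirming that the degenerate limits are benign. One must check that setting $L_{\phi,yx}=L_{\psi,zx}=0$ and discarding the strong-concavity parameters does not corrupt any step of Theorem~\ref{thm:1_full} that implicitly carried these quantities, and that the tightened choice of $\alpha$ (factor $/2$ rather than $/4$, with the $\mu_\phi,\mu_\psi$ terms removed) stays consistent with the step-size bound $\eta_1\le\gamma^2(1/\gamma-\delta_\phi)/2$, with $\tau=\gamma^2\alpha^2/4$, and with the definition $\nu=\min\{1,2\tau/(\gamma^2\alpha)\}$ used to pass from the potential bound to the three-term error bound. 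Verifying this compatibility is the crux of making the reduction rigorous.
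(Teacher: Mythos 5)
Your strategy coincides with the paper's: the paper proves Corollary~\ref{cor:1_full} in a single sentence, asserting that problem~\eqref{prob:dwc} and Algorithm~\ref{algo:2} are special cases of problem~\eqref{prob:dmax} and Algorithm~\ref{algo:1}, so the corollary ``directly follows'' from Theorem~\ref{thm:1_full}. You flesh out this reduction, and you correctly identify its real crux: the corollary's constants ($\alpha$ with denominators $2$ rather than $4$ and without $\mu_\phi,\mu_\psi$; $\tau$ without the $L_{\phi,yx}, L_{\psi,zx}$ ratios) are \emph{not} what literal substitution into Theorem~\ref{thm:1_full} yields, but what one gets by re-running its proof after the dual-error terms in Lemma~\ref{lem:2} and Lemma~\ref{lem:5} have been deleted. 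That re-derivation is the right repair, and your outline of it (single-variable contraction at rate $1-\eta_1(1/\gamma-\delta_\phi)/2$, reduced potential $P_t$, same descent-plus-telescoping machinery) is sound.

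However, one justification you give is false: a function that is constant in $y$ is \emph{not} ``trivially $\mu_\phi$-strongly concave for any $\mu_\phi$''. A constant function is concave, but $\mu$-strong concavity fails for every $\mu>0$; moreover $\argmax_{y\in\Y}\phi(\cdot,y)$ is then all of $\Y$, so the map $y^*(\cdot)$ in Lemma~\ref{lem:3} is not even well defined. Hence the instance $\phi(x,y):=\phi(x)$ does not satisfy Assumption~\ref{ass:1}(ii), and Theorem~\ref{thm:1_full} cannot be invoked verbatim --- an imprecision the paper's one-line proof shares. The gap is repairable in two ways: (a) embed $\phi(x,y):=\phi(x)-\tfrac{\mu}{2}\|y\|^2$ with $\Y=\{0\}$ (genuinely strongly concave, $L_{\phi,yx}=0$, and the $x$-iterates of Algorithm~\ref{algo:1} coincide with those of Algorithm~\ref{algo:2} since $\tilde{\partial}_x\phi$ does not depend on $y$), which gives the conclusion but only with the theorem's more conservative constants; or (b) the route you in fact sketch --- observe that Assumption~\ref{ass:1}(ii)--(iii) enter the proof of Theorem~\ref{thm:1_full} only through the $y_t$, $z_t$ error recursions, which are absent here, so the potential-function argument specializes directly and permits the stated $\alpha$ and $\tau$. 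Making (b) explicit, rather than appealing to the false strong-concavity claim, is what your write-up needs in order to be complete.
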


Since problem \eqref{prob:dwc} and Algorithm \ref{algo:2} are special cases of problem \eqref{prob:dmax} and Algorithm \ref{algo:1} respectively, Corollary~\ref{cor:1_full} directly follows from Theorem~\ref{thm:1_full}.

\subsection{Proof of Corollary~\ref{cor:2}}
We present a detailed version of Corollary~\ref{cor:2}

\begin{corollary}\label{cor:2_full}
    Consider Problem~\ref{prob:minmax} and assume Assumption~\ref{ass:3} holds. Suppose that the parameters $\gamma$, $\eta_0$ and $\eta_1$ in Algorithm~\ref{algo:3} are chosen as follows: 
    \begin{equation*}
        \begin{aligned}
            &0<\gamma <\delta_\phi^{-1}, \quad \alpha = \min \left\{\frac{1/\gamma - \delta_\phi}{2},  \mu_\phi \right\},\quad \tau = \min\left\{ \frac{ \gamma^2\alpha^2}{4} , \frac{ \mu_\phi^{1.5}\gamma^2 \alpha^{1.5}}{4L_{\phi,yx}} \right\},\quad \nu = \min\left\{1, \frac{2\tau}{ \gamma^2 \alpha}  \right\},\\
            & \eta_1 = \min\left\{\frac{\gamma^2 (1/\gamma - \delta_\phi)}{2},  \frac{1}{2L_F\tau}, \frac{ \min\{1,\gamma^2\}\min\left\{\alpha, \tau\right\} \nu \alpha}{384  \tau M^2 }  \epsilon^2\right\},\quad  \eta_0 = \tau\eta_1.
        \end{aligned}
    \end{equation*}
    Then we have 
    \begin{equation*}
    \frac{1}{T}\sum_{t=0}^{T-1}(\E\|x_\phi^{t+1} - \prox_{\gamma F}(x_t)\|^2+\E\|\nabla F_\gamma (x_t)\|^2)\leq\min\{1, \gamma^{-2}\}\frac{\epsilon^2}{4},
    \end{equation*}
    and consequently  $x_{\bar{t}}$ is a nearly $\epsilon$-critical point of problem \eqref{prob:minmax}, whenever
    \begin{equation*}
    T \geq \frac{16(F_\gamma(x_0) - F_\gamma^* +P_0)}{\min\{1, \gamma^{-2}\}\min\{\alpha, \tau\} \nu \epsilon^2}\max\left\{\frac{2}{\gamma^2 (1/\gamma - \delta_\phi)}, 2L_F\tau, \frac{384  \tau M^2 }{\min\{1,\gamma^2 \} \min\left\{\alpha, \tau\right\} \nu\alpha\epsilon^2}  \right\},
\end{equation*}
with
\begin{equation*}
    P_0 = \frac{2\eta_0}{\eta_1 \gamma^2 \alpha}\left(\E\|x_\phi^{1} - \prox_{\gamma F}(x_{0})\|^2 + \E\|y_{1} - y_{0}^*\|^2\right).
\end{equation*}
\end{corollary}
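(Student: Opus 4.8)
The plan is to obtain Corollary~\ref{cor:2_full} as the specialization of Theorem~\ref{thm:1_full} to the degenerate second component $\psi\equiv 0$, for which $\Psi(x)=\max_{z\in\Z}\psi(x,z)\equiv 0$. The key opening observation is that the zero function has $\Psi_\gamma\equiv 0$ and $\prox_{\gamma\Psi}(x)=x$, so the green component of the gradient vanishes identically and $\nabla F_\gamma(x)=\gamma^{-1}(x-\prox_{\gamma\Phi}(x))$, which is exactly the gradient used in the min-max case. Correspondingly, Algorithm~\ref{algo:3} is precisely Algorithm~\ref{algo:1} with the $(x_\psi,z)$-iterations and the green term of $G_{t+1}$ deleted, so that $G_{t+1}=\gamma^{-1}(x_t-x_\phi^{t+1})$. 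I would therefore replay the proof of Theorem~\ref{thm:1_full} after erasing every $\psi$-, $z$- and $x_\psi$-term, using only Assumption~\ref{ass:3}, which is Assumption~\ref{ass:1}(i)--(v) restricted to $\phi$.

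Concretely, I would carry out the following steps. First, cite Proposition~\ref{prop:1} (with $\Psi\equiv 0$) to get that $F_\gamma=\Phi_\gamma$ is $L_F$-smooth, and apply Lemma~\ref{lem:4} with $\eta_0\le 1/(2L_F)$ to obtain the descent inequality~\eqref{ineq:33}. Second, simplify the gradient-error bound: since both $\nabla F_\gamma(x_t)$ and $G_{t+1}$ lose their green components, \eqref{ineq:34} collapses to the clean identity $\|\nabla F_\gamma(x_t)-G_{t+1}\|^2=\gamma^{-2}\|x_\phi^{t+1}-\prox_{\gamma\Phi}(x_t)\|^2$, with neither the factor $2$ nor any $\psi$-term. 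Third, invoke Lemma~\ref{lem:2} unchanged for the $\phi$-side proximal-point recursion. Fourth, form the reduced potential $P_t=\tfrac{2\eta_0}{\eta_1\gamma^2\alpha}\big(\E\|x_\phi^{t+1}-\prox_{\gamma\Phi}(x_t)\|^2+\E\|y_{t+1}-y_t^*\|^2\big)$ and combine the descent inequality with Lemma~\ref{lem:2} scaled by $\tfrac{2\eta_0}{\eta_1\gamma^2\alpha}$, mirroring~\eqref{ineq:58}, to reach a one-sided recursion $\E[F_\gamma(x_{t+1})]+P_{t+1}\le\E[F_\gamma(x_t)]+(1-\beta)P_t-\beta\E\|\nabla F_\gamma(x_t)\|^2+\tfrac{24\eta_0\eta_1M^2}{\gamma^2\alpha}$ with $\beta=\min\{\eta_1\alpha/2,\eta_0/2\}$. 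Fifth, use the stated choices of $\alpha,\tau,\nu$ and $\eta_0=\tau\eta_1$ to force the $\|G_{t+1}\|^2$-coefficient to be nonpositive, average over $t=0,\dots,T-1$, and bound the two resulting error terms each by $\min\{1,\gamma^{-2}\}\epsilon^2/8$. Finally, apply Lemma~\ref{lem:1} with $\Psi\equiv 0$, for which the ``nearly $\epsilon$-critical'' notion coincides with ``nearly $\epsilon$-stationary'' of the weakly convex $\Phi$, to conclude that $x_{\bar t}$ is nearly $\epsilon$-stationary.

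Two bookkeeping points deserve care, and the second is the genuine obstacle. The improved constant is traced to there being a single noise source (only $\phi$), which contributes $24$ rather than the $48$ of the DMax case; hence the requirement $\tfrac{24\eta_0\eta_1M^2}{\nu\beta\gamma^2\alpha}\le\min\{1,\gamma^{-2}\}\epsilon^2/8$ halves the coefficient in the step-size threshold, producing the $384$ in the expression for $\eta_1$ and the matching $384$ in the iteration bound. The main obstacle is that one cannot literally substitute $\psi\equiv 0$ into Assumption~\ref{ass:1}(ii) or Lemma~\ref{lem:5}, since the zero function is not $\mu_\psi$-strongly concave for any $\mu_\psi>0$ and the constant $L_{\psi,zx}^2/\mu_\psi^3$ would blow up as $\mu_\psi\to 0$. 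I expect to resolve this by performing the reduction at the level of the \emph{proof} of Theorem~\ref{thm:1_full} rather than its statement: because Algorithm~\ref{algo:3} never runs the $z$-iteration, the quantity $\|z_{t+1}-z_t^*\|^2$ and the strong concavity of $\psi$ simply never enter, so dropping the entire $\psi$-block is valid and no degenerate constant is ever formed.
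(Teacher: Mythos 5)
Your proposal is correct and follows essentially the same route as the paper: the paper's own proof simply reruns the argument of Theorem~\ref{thm:1_full} with the $\psi$-block deleted and with inequality~\eqref{ineq:34} replaced by the exact identity $\|\nabla F_\gamma(x_t)-G_{t+1}\|^2=\gamma^{-2}\|\prox_{\gamma F}(x_t)-x_\phi^{t+1}\|^2$, which is precisely your second step. Your bookkeeping of the constants (a single noise source giving $24$ instead of $48$, hence $384$ in place of $768$) and your observation that the reduction must be done at the level of the proof rather than by substituting $\psi\equiv 0$ into the theorem statement (to avoid the degenerate $\mu_\psi$ constants) are both sound and make explicit what the paper leaves implicit.
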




\begin{proof}
    This proof is similar to that of Theorem~\ref{thm:1_full} except that the inequality~\eqref{ineq:34} is replaced by
    \begin{equation*}
\begin{aligned}
    \|\nabla F_\gamma(x_t) - G_{t+1}\|^2 &= \left\|\frac{1}{\gamma}(x_t - \prox_{\gamma F}(x_t)) - \frac{1}{\gamma}(x_t - x_\phi^{t+1})\right\|^2\\
    & = \frac{1}{\gamma^2}\|\prox_{\gamma F}(x_t) - x_\phi^{t+1}\|^2 .
\end{aligned}
\end{equation*}
\end{proof}

\section{More Experimental Results}
\begin{figure}[H]
    \centering
\hspace*{-0.1in}
    \subfigure{\includegraphics[scale=0.15]{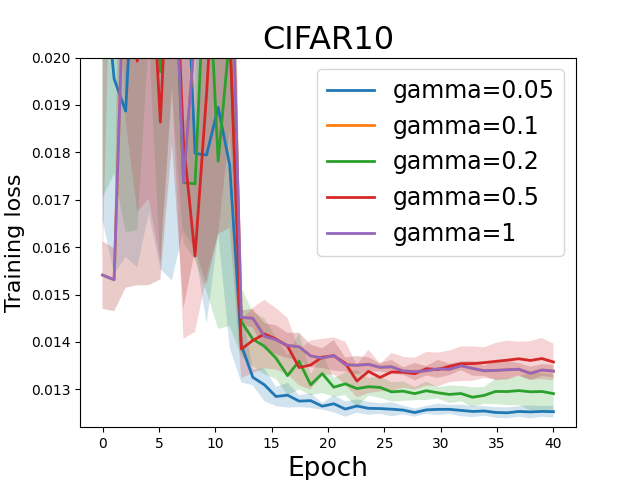}}
   \hspace*{-0.1in} \subfigure{\includegraphics[scale=0.15]{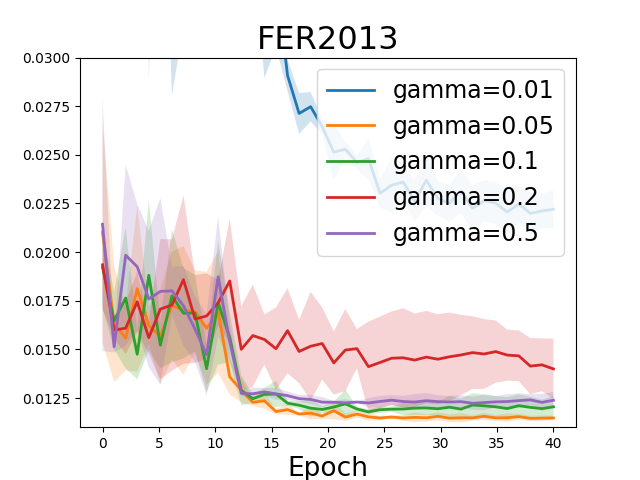}}
        \caption{Ablation Study of SMAG for PU Learning} \label{fig:pu_abla}
\end{figure}

\begin{table}[h!]
\centering
\caption{Mean $\pm$ std of fairness results on CelebA test dataset with {\it Bags Under Eyes} task labels, and {\it Male} sensitive attribute. Results are reported on 3 independent runs. We use bold font to denote the best result and use underline to denote the second best.}
\resizebox{0.6\columnwidth}{!}{
\begin{tabular}{c|cccc}
\toprule
&                                                                \multicolumn{4}{c}{\textbf{Bags Under Eyes, Male}}   
\\
\midrule
\textbf{Methods}            & \textbf{pAUC}$\uparrow$ & \textbf{EOD}$\downarrow$ & \textbf{EOP}$\downarrow$ & \textbf{DP} $\downarrow$            \\ \midrule
SOPA    & \underline{0.8293}     $\pm$ 0.006           & 0.2015  $\pm$ 0.041           & \underline{0.1000}$\pm$ 0.043             & 0.4055 $\pm$ 0.027                  \\
SMAG$^*$   &   0.8261     $\pm$ 0.004           & \underline{0.1848}  $\pm$ 0.023           & 0.1065 $\pm$ 0.046             & \underline{0.3754}  $\pm$ 0.033             \\ \hline
SGDA       &\textbf{0.8307}   $\pm$ 0.003             & 0.2026   $\pm$ 0.028            & 0.1096   $\pm$ 0.031          & 0.4028 $\pm$ 0.039     \\
EGDA      & 0.8262  $\pm$ 0.004            & 0.2223   $\pm$ 0.032          & 0.1287   $\pm$ 0.038              & 0.4200  $\pm$ 0.024       \\
SMAG          & 0.8278 $\pm$ 0.002           & \textbf{0.1642}  $\pm$ 0.025           & \textbf{0.0982}$\pm$ 0.034             & \textbf{0.3690} $\pm$ 0.029             \\
\bottomrule
\end{tabular}}
\end{table}

\newpage
\section*{NeurIPS Paper Checklist}

The checklist is designed to encourage best practices for responsible machine learning research, addressing issues of reproducibility, transparency, research ethics, and societal impact. Do not remove the checklist: {\bf The papers not including the checklist will be desk rejected.} The checklist should follow the references and precede the (optional) supplemental material.  The checklist does NOT count towards the page
limit. 

Please read the checklist guidelines carefully for information on how to answer these questions. For each question in the checklist:
\begin{itemize}
    \item You should answer \answerYes{}, \answerNo{}, or \answerNA{}.
    \item \answerNA{} means either that the question is Not Applicable for that particular paper or the relevant information is Not Available.
    \item Please provide a short (1–2 sentence) justification right after your answer (even for NA). 
\end{itemize}

{\bf The checklist answers are an integral part of your paper submission.} They are visible to the reviewers, area chairs, senior area chairs, and ethics reviewers. You will be asked to also include it (after eventual revisions) with the final version of your paper, and its final version will be published with the paper.

The reviewers of your paper will be asked to use the checklist as one of the factors in their evaluation. While "\answerYes{}" is generally preferable to "\answerNo{}", it is perfectly acceptable to answer "\answerNo{}" provided a proper justification is given (e.g., "error bars are not reported because it would be too computationally expensive" or "we were unable to find the license for the dataset we used"). In general, answering "\answerNo{}" or "\answerNA{}" is not grounds for rejection. While the questions are phrased in a binary way, we acknowledge that the true answer is often more nuanced, so please just use your best judgment and write a justification to elaborate. All supporting evidence can appear either in the main paper or the supplemental material, provided in appendix. If you answer \answerYes{} to a question, in the justification please point to the section(s) where related material for the question can be found.

IMPORTANT, please:
\begin{itemize}
    \item {\bf Delete this instruction block, but keep the section heading ``NeurIPS paper checklist"},
    \item  {\bf Keep the checklist subsection headings, questions/answers and guidelines below.}
    \item {\bf Do not modify the questions and only use the provided macros for your answers}.
\end{itemize}


\begin{enumerate}

\item {\bf Claims}
    \item[] Question: Do the main claims made in the abstract and introduction accurately reflect the paper's contributions and scope?
    \item[] Answer: \answerYes{} 
    \item[] Justification: We have detailed explanation on all items described in the abstract presented in the paper.
    \item[] Guidelines:
    \begin{itemize}
        \item The answer NA means that the abstract and introduction do not include the claims made in the paper.
        \item The abstract and/or introduction should clearly state the claims made, including the contributions made in the paper and important assumptions and limitations. A No or NA answer to this question will not be perceived well by the reviewers. 
        \item The claims made should match theoretical and experimental results, and reflect how much the results can be expected to generalize to other settings. 
        \item It is fine to include aspirational goals as motivation as long as it is clear that these goals are not attained by the paper. 
    \end{itemize}

\item {\bf Limitations}
    \item[] Question: Does the paper discuss the limitations of the work performed by the authors?
    \item[] Answer: \answerYes{} 
    \item[] Justification: We discussed the limitation of this in the conclusion section.
    \item[] Guidelines:
    \begin{itemize}
        \item The answer NA means that the paper has no limitation while the answer No means that the paper has limitations, but those are not discussed in the paper. 
        \item The authors are encouraged to create a separate "Limitations" section in their paper.
        \item The paper should point out any strong assumptions and how robust the results are to violations of these assumptions (e.g., independence assumptions, noiseless settings, model well-specification, asymptotic approximations only holding locally). The authors should reflect on how these assumptions might be violated in practice and what the implications would be.
        \item The authors should reflect on the scope of the claims made, e.g., if the approach was only tested on a few datasets or with a few runs. In general, empirical results often depend on implicit assumptions, which should be articulated.
        \item The authors should reflect on the factors that influence the performance of the approach. For example, a facial recognition algorithm may perform poorly when image resolution is low or images are taken in low lighting. Or a speech-to-text system might not be used reliably to provide closed captions for online lectures because it fails to handle technical jargon.
        \item The authors should discuss the computational efficiency of the proposed algorithms and how they scale with dataset size.
        \item If applicable, the authors should discuss possible limitations of their approach to address problems of privacy and fairness.
        \item While the authors might fear that complete honesty about limitations might be used by reviewers as grounds for rejection, a worse outcome might be that reviewers discover limitations that aren't acknowledged in the paper. The authors should use their best judgment and recognize that individual actions in favor of transparency play an important role in developing norms that preserve the integrity of the community. Reviewers will be specifically instructed to not penalize honesty concerning limitations.
    \end{itemize}

\item {\bf Theory Assumptions and Proofs}
    \item[] Question: For each theoretical result, does the paper provide the full set of assumptions and a complete (and correct) proof?
    \item[] Answer: \answerYes{} 
    \item[] Justification: Assumptions are presented in the main paper. The detailed theorem statements and proofs are presented in the appendix.
    \item[] Guidelines:
    \begin{itemize}
        \item The answer NA means that the paper does not include theoretical results. 
        \item All the theorems, formulas, and proofs in the paper should be numbered and cross-referenced.
        \item All assumptions should be clearly stated or referenced in the statement of any theorems.
        \item The proofs can either appear in the main paper or the supplemental material, but if they appear in the supplemental material, the authors are encouraged to provide a short proof sketch to provide intuition. 
        \item Inversely, any informal proof provided in the core of the paper should be complemented by formal proofs provided in appendix or supplemental material.
        \item Theorems and Lemmas that the proof relies upon should be properly referenced. 
    \end{itemize}

    \item {\bf Experimental Result Reproducibility}
    \item[] Question: Does the paper fully disclose all the information needed to reproduce the main experimental results of the paper to the extent that it affects the main claims and/or conclusions of the paper (regardless of whether the code and data are provided or not)?
    \item[] Answer: \answerYes{} 
    \item[] Justification: We provide all the information needed to reproduce the experimental results in the application section. 
    \item[] Guidelines:
    \begin{itemize}
        \item The answer NA means that the paper does not include experiments.
        \item If the paper includes experiments, a No answer to this question will not be perceived well by the reviewers: Making the paper reproducible is important, regardless of whether the code and data are provided or not.
        \item If the contribution is a dataset and/or model, the authors should describe the steps taken to make their results reproducible or verifiable. 
        \item Depending on the contribution, reproducibility can be accomplished in various ways. For example, if the contribution is a novel architecture, describing the architecture fully might suffice, or if the contribution is a specific model and empirical evaluation, it may be necessary to either make it possible for others to replicate the model with the same dataset, or provide access to the model. In general. releasing code and data is often one good way to accomplish this, but reproducibility can also be provided via detailed instructions for how to replicate the results, access to a hosted model (e.g., in the case of a large language model), releasing of a model checkpoint, or other means that are appropriate to the research performed.
        \item While NeurIPS does not require releasing code, the conference does require all submissions to provide some reasonable avenue for reproducibility, which may depend on the nature of the contribution. For example
        \begin{enumerate}
            \item If the contribution is primarily a new algorithm, the paper should make it clear how to reproduce that algorithm.
            \item If the contribution is primarily a new model architecture, the paper should describe the architecture clearly and fully.
            \item If the contribution is a new model (e.g., a large language model), then there should either be a way to access this model for reproducing the results or a way to reproduce the model (e.g., with an open-source dataset or instructions for how to construct the dataset).
            \item We recognize that reproducibility may be tricky in some cases, in which case authors are welcome to describe the particular way they provide for reproducibility. In the case of closed-source models, it may be that access to the model is limited in some way (e.g., to registered users), but it should be possible for other researchers to have some path to reproducing or verifying the results.
        \end{enumerate}
    \end{itemize}

\item {\bf Open access to data and code}
    \item[] Question: Does the paper provide open access to the data and code, with sufficient instructions to faithfully reproduce the main experimental results, as described in supplemental material?
    \item[] Answer: \answerYes{} 
    \item[] Justification: The code is included in the supplemental material. The data we used are public datasets.
    \item[] Guidelines: 
    \begin{itemize}
        \item The answer NA means that paper does not include experiments requiring code.
        \item Please see the NeurIPS code and data submission guidelines (\url{https://nips.cc/public/guides/CodeSubmissionPolicy}) for more details.
        \item While we encourage the release of code and data, we understand that this might not be possible, so “No” is an acceptable answer. Papers cannot be rejected simply for not including code, unless this is central to the contribution (e.g., for a new open-source benchmark).
        \item The instructions should contain the exact command and environment needed to run to reproduce the results. See the NeurIPS code and data submission guidelines (\url{https://nips.cc/public/guides/CodeSubmissionPolicy}) for more details.
        \item The authors should provide instructions on data access and preparation, including how to access the raw data, preprocessed data, intermediate data, and generated data, etc.
        \item The authors should provide scripts to reproduce all experimental results for the new proposed method and baselines. If only a subset of experiments are reproducible, they should state which ones are omitted from the script and why.
        \item At submission time, to preserve anonymity, the authors should release anonymized versions (if applicable).
        \item Providing as much information as possible in supplemental material (appended to the paper) is recommended, but including URLs to data and code is permitted.
    \end{itemize}

\item {\bf Experimental Setting/Details}
    \item[] Question: Does the paper specify all the training and test details (e.g., data splits, hyperparameters, how they were chosen, type of optimizer, etc.) necessary to understand the results?
    \item[] Answer: \answerYes{} 
    \item[] Justification: We provided all the implementation details in the application section.
    \item[] Guidelines:
    \begin{itemize}
        \item The answer NA means that the paper does not include experiments.
        \item The experimental setting should be presented in the core of the paper to a level of detail that is necessary to appreciate the results and make sense of them.
        \item The full details can be provided either with the code, in appendix, or as supplemental material.
    \end{itemize}

\item {\bf Experiment Statistical Significance}
    \item[] Question: Does the paper report error bars suitably and correctly defined or other appropriate information about the statistical significance of the experiments?
    \item[] Answer: \answerYes{} 
    \item[] Justification: We ran multiple trails for each experiment setting and present the error bars in the plot or table.
    \item[] Guidelines:
    \begin{itemize}
        \item The answer NA means that the paper does not include experiments.
        \item The authors should answer "Yes" if the results are accompanied by error bars, confidence intervals, or statistical significance tests, at least for the experiments that support the main claims of the paper.
        \item The factors of variability that the error bars are capturing should be clearly stated (for example, train/test split, initialization, random drawing of some parameter, or overall run with given experimental conditions).
        \item The method for calculating the error bars should be explained (closed form formula, call to a library function, bootstrap, etc.)
        \item The assumptions made should be given (e.g., Normally distributed errors).
        \item It should be clear whether the error bar is the standard deviation or the standard error of the mean.
        \item It is OK to report 1-sigma error bars, but one should state it. The authors should preferably report a 2-sigma error bar than state that they have a 96\% CI, if the hypothesis of Normality of errors is not verified.
        \item For asymmetric distributions, the authors should be careful not to show in tables or figures symmetric error bars that would yield results that are out of range (e.g. negative error rates).
        \item If error bars are reported in tables or plots, The authors should explain in the text how they were calculated and reference the corresponding figures or tables in the text.
    \end{itemize}

\item {\bf Experiments Compute Resources}
    \item[] Question: For each experiment, does the paper provide sufficient information on the computer resources (type of compute workers, memory, time of execution) needed to reproduce the experiments?
    \item[] Answer: \answerYes{} 
    \item[] Justification: Yes. Compute resources information is provided in the application section.
    \item[] Guidelines:
    \begin{itemize}
        \item The answer NA means that the paper does not include experiments.
        \item The paper should indicate the type of compute workers CPU or GPU, internal cluster, or cloud provider, including relevant memory and storage.
        \item The paper should provide the amount of compute required for each of the individual experimental runs as well as estimate the total compute. 
        \item The paper should disclose whether the full research project required more compute than the experiments reported in the paper (e.g., preliminary or failed experiments that didn't make it into the paper). 
    \end{itemize}
    
\item {\bf Code Of Ethics}
    \item[] Question: Does the research conducted in the paper conform, in every respect, with the NeurIPS Code of Ethics \url{https://neurips.cc/public/EthicsGuidelines}?
    \item[] Answer: \answerYes{} 
    \item[] Justification: The research conducted in the paper conform, in every respect, with the NeurIPS Code of Ethics.
    \item[] Guidelines:
    \begin{itemize}
        \item The answer NA means that the authors have not reviewed the NeurIPS Code of Ethics.
        \item If the authors answer No, they should explain the special circumstances that require a deviation from the Code of Ethics.
        \item The authors should make sure to preserve anonymity (e.g., if there is a special consideration due to laws or regulations in their jurisdiction).
    \end{itemize}

\item {\bf Broader Impacts}
    \item[] Question: Does the paper discuss both potential positive societal impacts and negative societal impacts of the work performed?
    \item[] Answer: \answerNA{} 
    \item[] Justification:  There is no societal impact of the work performed.
    \item[] Guidelines:
    \begin{itemize}
        \item The answer NA means that there is no societal impact of the work performed.
        \item If the authors answer NA or No, they should explain why their work has no societal impact or why the paper does not address societal impact.
        \item Examples of negative societal impacts include potential malicious or unintended uses (e.g., disinformation, generating fake profiles, surveillance), fairness considerations (e.g., deployment of technologies that could make decisions that unfairly impact specific groups), privacy considerations, and security considerations.
        \item The conference expects that many papers will be foundational research and not tied to particular applications, let alone deployments. However, if there is a direct path to any negative applications, the authors should point it out. For example, it is legitimate to point out that an improvement in the quality of generative models could be used to generate deepfakes for disinformation. On the other hand, it is not needed to point out that a generic algorithm for optimizing neural networks could enable people to train models that generate Deepfakes faster.
        \item The authors should consider possible harms that could arise when the technology is being used as intended and functioning correctly, harms that could arise when the technology is being used as intended but gives incorrect results, and harms following from (intentional or unintentional) misuse of the technology.
        \item If there are negative societal impacts, the authors could also discuss possible mitigation strategies (e.g., gated release of models, providing defenses in addition to attacks, mechanisms for monitoring misuse, mechanisms to monitor how a system learns from feedback over time, improving the efficiency and accessibility of ML).
    \end{itemize}
    
\item {\bf Safeguards}
    \item[] Question: Does the paper describe safeguards that have been put in place for responsible release of data or models that have a high risk for misuse (e.g., pre-trained language models, image generators, or scraped datasets)?
    \item[] Answer:\answerNA{} 
    \item[] Justification: This paper poses no such risks.
    \item[] Guidelines:
    \begin{itemize}
        \item The answer NA means that the paper poses no such risks.
        \item Released models that have a high risk for misuse or dual-use should be released with necessary safeguards to allow for controlled use of the model, for example by requiring that users adhere to usage guidelines or restrictions to access the model or implementing safety filters. 
        \item Datasets that have been scraped from the Internet could pose safety risks. The authors should describe how they avoided releasing unsafe images.
        \item We recognize that providing effective safeguards is challenging, and many papers do not require this, but we encourage authors to take this into account and make a best faith effort.
    \end{itemize}

\item {\bf Licenses for existing assets}
    \item[] Question: Are the creators or original owners of assets (e.g., code, data, models), used in the paper, properly credited and are the license and terms of use explicitly mentioned and properly respected?
    \item[] Answer: \answerYes{} 
    \item[] Justification: For all assets we used in this paper, we cited their original source.
    \item[] Guidelines:
    \begin{itemize}
        \item The answer NA means that the paper does not use existing assets.
        \item The authors should cite the original paper that produced the code package or dataset.
        \item The authors should state which version of the asset is used and, if possible, include a URL.
        \item The name of the license (e.g., CC-BY 4.0) should be included for each asset.
        \item For scraped data from a particular source (e.g., website), the copyright and terms of service of that source should be provided.
        \item If assets are released, the license, copyright information, and terms of use in the package should be provided. For popular datasets, \url{paperswithcode.com/datasets} has curated licenses for some datasets. Their licensing guide can help determine the license of a dataset.
        \item For existing datasets that are re-packaged, both the original license and the license of the derived asset (if it has changed) should be provided.
        \item If this information is not available online, the authors are encouraged to reach out to the asset's creators.
    \end{itemize}

\item {\bf New Assets}
    \item[] Question: Are new assets introduced in the paper well documented and is the documentation provided alongside the assets?
    \item[] Answer: \answerNA{} 
    \item[] Justification: The paper does not release new assets.
    \item[] Guidelines:
    \begin{itemize}
        \item The answer NA means that the paper does not release new assets.
        \item Researchers should communicate the details of the dataset/code/model as part of their submissions via structured templates. This includes details about training, license, limitations, etc. 
        \item The paper should discuss whether and how consent was obtained from people whose asset is used.
        \item At submission time, remember to anonymize your assets (if applicable). You can either create an anonymized URL or include an anonymized zip file.
    \end{itemize}

\item {\bf Crowdsourcing and Research with Human Subjects}
    \item[] Question: For crowdsourcing experiments and research with human subjects, does the paper include the full text of instructions given to participants and screenshots, if applicable, as well as details about compensation (if any)? 
    \item[] Answer: \answerNA{} 
    \item[] Justification: The paper does not involve crowdsourcing nor research with human subjects.
    \item[] Guidelines:
    \begin{itemize}
        \item The answer NA means that the paper does not involve crowdsourcing nor research with human subjects.
        \item Including this information in the supplemental material is fine, but if the main contribution of the paper involves human subjects, then as much detail as possible should be included in the main paper. 
        \item According to the NeurIPS Code of Ethics, workers involved in data collection, curation, or other labor should be paid at least the minimum wage in the country of the data collector. 
    \end{itemize}

\item {\bf Institutional Review Board (IRB) Approvals or Equivalent for Research with Human Subjects}
    \item[] Question: Does the paper describe potential risks incurred by study participants, whether such risks were disclosed to the subjects, and whether Institutional Review Board (IRB) approvals (or an equivalent approval/review based on the requirements of your country or institution) were obtained?
    \item[] Answer: \answerNA{} 
    \item[] Justification: The paper does not involve crowdsourcing nor research with human subjects.
    \item[] Guidelines:
    \begin{itemize}
        \item The answer NA means that the paper does not involve crowdsourcing nor research with human subjects.
        \item Depending on the country in which research is conducted, IRB approval (or equivalent) may be required for any human subjects research. If you obtained IRB approval, you should clearly state this in the paper. 
        \item We recognize that the procedures for this may vary significantly between institutions and locations, and we expect authors to adhere to the NeurIPS Code of Ethics and the guidelines for their institution. 
        \item For initial submissions, do not include any information that would break anonymity (if applicable), such as the institution conducting the review.
    \end{itemize}

\end{enumerate}

\end{document}